\numberwithin{equation}{section}
\newtheorem{defn}{Definition}[section]
\newtheorem{corollary}[defn]{Corollary}
\newtheorem{rem}[defn]{Remark}
\newtheorem{exm}[defn]{Example}
\newtheorem{lemma}[defn]{Lemma}
\newtheorem{theorem}[defn]{Theorem}
\newcommand\hide[1]{}
\newcommand\V{\bigvee}
\newcommand\ie{i.e.}
\newcommand\st{\mid}
\newcommand\cf{\textrm{cf.}}
\newcommand\opens{\operatorname{\mathcal{O}}}
\newcommand\groupoid{\operatorname{\mathcal{G}}}
\newcommand\spp{\varsigma}
\newcommand\Frm{\textit{Frm}}
\newcommand\Loc{\textit{Loc}}
\newcommand\opp[1]{{#1}^{\textrm{op}}}
\newcommand\rs{\mathrm{R}}
\newcommand\ls{\mathrm{L}}
\newcommand\ts{\operatorname{T}}
\newcommand\ident{\mathrm{id}}
\newcommand\ipi{\mathcal I}
\newcommand\SL{\mathit{SL}}
\newcommand\biIQLoc{\mathbf{IQLoc}}
\newcommand\grpd{\mathit{Gpd}}
\newcommand\HSG{\mathit{Gpd}_{\mathrm{HS}}}
\newcommand\bigrpd{\mathbf{Gpd}}
\newcommand\tspp{\tau}
\newcommand\act{\mathfrak a}
\newcommand\bct{\mathfrak b}
\newcommand\intg[1]{\widetilde{#1}}
\newcommand\lres[2]{_{#1}\vert{#2}}
\newcommand\rres[2]{{#1}\vert_{#2}}
\newcommand\cov{\widehat}
\begin{document}

\title{Principal bundles of open groupoids}

\author{Juan Pablo Quijano}

\maketitle

\maketitle

\vspace*{-1cm}
\begin{abstract}
Taking into account the correspondence between open groupoids and their quantales, we establish a bijective correspondence between the principal $G$-bundles whose left projection is an open surjection and the principal $\opens(G)$-locales for any open groupoid $G$ and its groupoid quantale $\opens(G)$, the latter requires a generalization of the supported modules for groupoid quantales.
\\
\vspace*{-2mm}~\\
\textit{Keywords:} Open groupoids, groupoid quantales, principal bundles, Hilsum--Skandalis maps, supported modules\\
\vspace*{-2mm}~\\
2010 \textit{Mathematics Subject
Classification}: 06D22, 06F07, 18B99, 18F15, 55R10
\end{abstract}

{\small{\tableofcontents}}

\newpage

\section{Introduction}\label{introduction}
Open localic groupoids are in bijective correspondence with groupoid quantales \cite{PR12,QRnonuspp}, which generalizes the one between \'etale localic groupoid and inverse quantal frames of \cite{Re07}. 
The latter is not functorial in the usual sense (groupoid functors do not correspond with homomorphism of unital involutive quantales). Although, since \'etale groupoids have a rich inverse semigroup of local bisections, this correspondence can be extended to a quantale module theoretic description of actions and sheaves \cite{GSQS}, and moreover to a bicategorical bi-equivalence: the bicategory $\bigrpd$, of localic \'etale groupoids with bi-actions as 1-cells, is bi-equivalent to the bicategory $\biIQLoc$, whose objects are the inverse quantal frames and whose 1-cells are quantale bimodules that satisfy a mild condition \cite{Re15}. Furthermore in \cite{Funct2} more general functors between groupoids are studied by using the quantale module theoretical language, namely, good definitions of principal bundle and Hilsum--Skandalis map for inverse quantal frames in those cases where the modules are sheaves are provided. The restriction to sheaves makes no difference for Morita equivalence because the ``Morita bimodules'' for \'etale groupoids are always necessarily sheaves, this leads to a description of Morita equivalence for \'etale groupoids using the Hilbert module language which resembles the imprimitiviy bimodules of Morita equivalence of C*-algebras. This accomplished the previously ongoing program concerning quantales of \'etale groupoids started in \cite{Re07}. 

Despite the fact that \'etale groupoids are useful, and indeed in some cases, groupoids arising in geometric situations (for instance foliations) are Morita equivalent to \'etale groupoids, it is useful to have a better understanding of the relation between quantales and non-\'etale groupoids. In \cite{QuijanoPhD}, the author provided steps for addressing a similar program for quantales of non-\'etale groupoids, in this case studying actions and sheaves for a suitable subclass of open groupoids, namely those with ``\'etale covers" so-called \'etale-covered groupoids: an \'etale-covered groupoid consists of an open (localic) groupoid $G$ which, similarly to \'etale groupoids, is equipped with a well behaved inverse semigroup of local bisections due to which there is an \'etale groupoid $\cov G$ and a surjective groupoid morphism $J:\cov G\to G$ which restricts to an isomorphism $\cov G_0\cong G_0$. We say that $\cov G$ is an ``\'etale cover'' of $G$.  Coverability is not too strong a condition, in the sense that at least it is satisfied by the typical examples of groupoids that arise in analysis and geometry, such as the locally compact groupoids of \cite{Paterson} (provided there are enough local bisections) and, in particular, Lie groupoids. See~\cite{PR12}. From an \'etale-covered groupoid $G$ one therefore obtains two quantales, $\opens(G)$ and $\opens(\cov G)$ (the latter being an inverse quantal frame), and a strong form of embedding $\opens(G)\to\opens(\cov G)$ such that in particular $\opens(G)$ can be regarded as a sub-frame and an ideal in $\opens(\cov G)$. Such a quantale $\opens(G)$ is called an \emph{inverse-embedded quantal frame}. Conversely, from an inverse-embedded quantal frame $\opens$ one obtains an \'etale-covered groupoid, and the two constructions give us a bijection (up to isomorphisms) between the classes of \'etale-covered groupoids and inverse-embedded quantal frames. The latter correspondence and an appropriate notion of action for such quantales yield an equivalence of categories $G$-$\Loc\cong \opens$-$\Loc$ where $\opens=\opens(G)$ is the quantale of an \'etale-covered groupoid $G$. By applying the latter equivalence we characterized completely the sheaves on such groupoids. Moreover, it can be proved that the bicategory whose 1-cells are the \'etale-covered groupoid bi-actions is bi-equivalent to a corresponding bicategory of inverse-embedded quantale frames and bimodules. Although, the Morita theory for \'etale-covered groupoids via their quantales is lacking.

Due to the absence of local bisections, there is no module theoretic description of actions, sheaves, etc, for open groupoids in general so far. However, if we restrict the category of $G$-locales (groupoid actions) and equivariant maps $G$-\Loc\ to a subcategory of groupoid actions so-called  \emph{fully open principal $G$-bundles}: this is a principal $G$-bundle whose left projection is an open surjection, we can establish a bijective correspondence between the fully open principal $G$-bundles and principal $\opens(G)$-locales for any open groupoid $G$. In order to obtain the right notion of principal $\opens(G)$-locale, we need to generalize the notion of stably supported module of \cite{GSQS}. One of the purposes is to recover the algebraic simplicity and convenience of supported modules in the general setting of groupoid quantales. Here, we shall work with a Hilbert $Q$-module  $X$ for a groupoid quantale $Q$, which are $Q$-modules equipped with an inner product satisfying conditions that generalize those of \cite{GSQS}. Then a support $\spp_X:X\to A$ (where $A$ is the base locale of the groupoid quantale $Q$) is defined to be a sup-lattice homomorphism whose axioms interplay with the inner product of the Hilbert structure. So we shall see that notion of equivariantly supported $A$-$A$-quantale will play a central role to extend the notion of stably supported $Q$-module, which preserves all the nice properties of the \'etale case. The latter will be used extensively on the rest of the paper. Finally, we would like to remark that in order to study Morita equivalence for open groupoids the restriction to principal $G$-bundles whose left projection is an open projection makes no difference because the ``Morita bibundles'' have necessarily open surjections as projections. This provides the first steps to establish Morita equivalence for open groupoids in terms of their quantales, in particular, this could be useful for \'etale-covered groupoids where the projections cannot be assumed to be sheaves \cite{QuijanoPhD}. However, the latter will not be addressed in this paper.

This paper is organized as follows: In Section~\ref{section:preliminaries} we fix terminology and give the basic definitions and properties of locales, groupoids and quantales, and finally (bi-)actions of groupoids; in Section~\ref{section:principalbundles} we provide an overview, for open groupoids, of the concept being addressed here, to some extent with the purpose of adapting the presentation of recall the definitions of principal $G$-bundle and Hilsum--Skandalis maps for an open groupoid $G$, which  of~\cites{Mr99,MrcunPhD, Funct2} to the setting of open localic groupoids rather than \'etale groupoids (localic or topological). We shall study some properties of pullbacks of principal bundles in order to prove that the composition of principal bundles is again a principal bundle, therefore, we can define the category $\HSG$ of Hilsum--Skandalis maps; In Section~\ref{section:supportedmodules} we generalize to open groupoids the algebraic theory of supported modules for \'etale groupoids \cite{GSQS}. Whereby, the main aim is not only to complete the toolbox needed in order to establish the main result of this paper but also to develop the theory stably supported modules on its own; in Section~\ref{section:principallocales} we introduce the notion of $Q$-locale for an arbitrary groupoid quantale $Q$ this leads to Theorem~\ref{theo:fullopenlocale}, in which, we shall prove that given a $Q$-locale $X$ we can obtain a $G$-locale whose left projection is an open surjection where $G=\groupoid(G)$. In order to obtain the converse, we have to restrict the category $G$-\Loc\ to a subcategory of fully open principal $G$-bundles, and then, by introducing the notion of principal $Q$-locale, we will obtain our main result Corollary~\ref{cor:bijectionprincipalbundles}.

%%%%%%%%%%%%%%%
%%%%%%%%%%%%%%%

\section{Preliminaries}\label{section:preliminaries}
We shall use common terminology and notation for  locales,
quantales, and groupoids, following \cite{Funct2,GSQS,Re15}.
\paragraph{Frames and locales.} 
A \emph{frame} is a sup-lattice $L$ satisfying the following distributive law:
\begin{equation}\label{preliminaries, eq: frames}
a \wedge \V B = \V \{ a\wedge b\ \vert\ b\in B  \}\,.
\end{equation}
A \emph{frame homomorphism} is a sup-lattice homomorphism $f:L\to M$ between frames that also preserves binary meets.
The category of frames, formed by frames as objects and frame homomorphisms as morphisms, will be denoted by \Frm.

We denote by $\Loc$ the category of \emph{locales}, which is the dual of $\Frm$. We shall not make any distinction between a locale $X$ regarded as an object of $\Loc$ or as an object of $\Frm = \opp\Loc$ in this paper

If $f:X\to Y$ is a map of locales we shall refer to the frame homomorphism $f^*:Y\to X$ that defines it as its \emph{inverse image homomorphism}. 

A map of locales $f: L\to M$ is said to be \emph{semiopen} if $f^*:M\to L$ preserves all meets (or, equivalently if it has a left adjoint $f_!:X\to Y$ called the \emph{direct image}), and, it is said to be \emph{open} if it is semiopen and satisfies the so-called \emph{the Frobenius condition}:
\begin{equation}\label{preliminaries, eq: frobeniuscondition}
f_!(a\wedge f^*(b)) = f_!(a)\wedge b\,,
\end{equation}
for all $a\in L$ and $b\in M$. The following two lemmas are fundamental properties of open maps of locales:
\begin{lemma}{\cite[section VI.4]{JT}}\label{lemma:BCcondition}
Consider a pullback in \Loc:
\[
\xymatrix{
A \ar[d]_{g}\ar[r]^{h} & B \ar[d]^{f} \ar[d]^{f}\\
C \ar[r]^{j} & D
}
\]
Then if $f$ is open then $g$ is also open. Furthermore we have \emph{the Beck--Chevalley condition}
\[  j^*\circ f_! = g_! \circ h^*\;. \]
\end{lemma}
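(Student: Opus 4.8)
The plan is to pass to the frame side, where the given pullback square of locales becomes a pushout square of frames, and to exploit the description of frames as the commutative monoids in the symmetric monoidal category $\mathbf{Sup}$ of sup-lattices. Writing the four frames again as $A,B,C,D$ (identifying each locale with its frame, as above), the pushout of $B \xleftarrow{f^*} D \xrightarrow{j^*} C$ has as underlying sup-lattice the relative tensor product $B \otimes_D C$, where $B$ and $C$ are viewed as $D$-modules through $f^*$ and $j^*$; the coprojections are $h^*(b) = b \otimes \top$ and $g^*(c) = \top \otimes c$, a general element is a join of rectangles $h^*(b) \wedge g^*(c) = b \otimes c$, and the frame structure is fixed by the meet rule $(b\otimes c)\wedge(b'\otimes c') = (b\wedge b')\otimes(c\wedge c')$. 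First I would record the key reformulation that the Frobenius condition \eqref{preliminaries, eq: frobeniuscondition} for $f$ says exactly that $f_! : B \to D$ is a homomorphism of $D$-modules (with $B$ acted on through $f^*$).

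Next I would define the candidate direct image as the base change $g_! := f_! \otimes_D C : B \otimes_D C \to D \otimes_D C \cong C$, so that on rectangles $g_!(b\otimes c) = j^*(f_! b)\wedge c$. Since $\mathbf{Sup}$ is order-enriched and the functor $-\otimes_D C$ acts monotonically on hom-sets, it is a $2$-functor, and $2$-functors preserve adjunctions; applying it to the adjunction $f_! \dashv f^*$ yields $g_! \dashv (f^*\otimes_D C) = g^*$. This simultaneously shows that $g$ is semiopen and pins down $g_!$ explicitly, with no separate well-definedness argument for the generator formula.

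It then remains to check the Frobenius condition for $g$ and to read off Beck--Chevalley. For Frobenius I would verify $g_!(w \wedge g^*(c)) = g_!(w)\wedge c$ on a rectangle $w = b\otimes c'$, using $(b\otimes c')\wedge(\top\otimes c) = b\otimes(c'\wedge c)$ to make both sides equal to $j^*(f_! b)\wedge c'\wedge c$, and then extend to arbitrary $w$ by join-preservation of $g_!$ together with the distributive law. The Beck--Chevalley identity is immediate by taking $c = \top$: $g_! h^*(b) = g_!(b\otimes\top) = j^*(f_! b)\wedge\top = j^*(f_! b)$, that is $g_! \circ h^* = j^* \circ f_!$, which also re-confirms that $g$ is open.

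The step I expect to be the main obstacle — the only genuinely structural input — is the identification of the underlying sup-lattice of the frame pushout with the relative tensor product $B \otimes_D C$, together with the stated meet rule for rectangles; this is precisely what legitimizes the base-change description of $g_!$ and avoids an \emph{ad hoc} check that $b\otimes c \mapsto j^*(f_! b)\wedge c$ respects the pushout relations. If one preferred to sidestep the monoidal machinery, the alternative is to set $g_!(w) = \V\{\, j^*(f_! b)\wedge c \st h^*(b)\wedge g^*(c)\le w\,\}$ directly and verify the adjunction $g_!(w)\le c \iff w\le g^*(c)$ by hand; there the Frobenius condition for $f$ is exactly what is needed to control meets of rectangles, and the remainder is formal manipulation with $f_!\dashv f^*$ and the frame distributive law.
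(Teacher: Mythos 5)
The paper itself gives no proof of this lemma: it is imported verbatim from Joyal--Tierney \cite{JT}, so the only comparison available is with that source, whose argument is exactly the one you give. Your proof is correct --- frame pushout realized as the relative tensor product $B\otimes_D C$ with the rectangle meet rule, the Frobenius condition for $f$ reformulated as $D$-linearity of $f_!$, the direct image $g_!$ obtained by base change $f_!\otimes_D C$ (with the adjunction $g_!\dashv g^*$ preserved by the order-enriched functor $-\otimes_D C$), and Beck--Chevalley read off on rectangles --- so you have in effect reconstructed the standard Joyal--Tierney proof, including correctly identifying the tensor-product description of the frame pushout as the one genuinely structural input.
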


\begin{lemma}\label{lemma:reflextionopeness}
Consider a pullback in \Loc:
\[
\xymatrix{
A \ar@{->>}[d]^g\ar[r]^{h} & B \ar[d]^{f} \ar@{->>}[d]^f\\
C \ar[r]^{j} & D
}
\]
such that $h$ is open and $f,g$ are open surjections. Then $j$ is open. 
\end{lemma}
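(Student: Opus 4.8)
The plan is to produce the direct image $j_!$ by an explicit formula and then check directly that it is left adjoint to $j^*$ and satisfies the Frobenius condition. First I would record the data extracted from the square. Since it commutes, $j\circ g = f\circ h$, and hence $g^*\circ j^* = h^*\circ f^*$. Because $f$ is open, Lemma~\ref{lemma:BCcondition} applies and yields both the openness of $g$ (already assumed) and the Beck--Chevalley identity $j^*\circ f_! = g_!\circ h^*$. Finally, since $g$ is an \emph{open surjection}, its inverse image is injective and $g_!\circ g^* = \ident$; this is the one place where surjectivity of $g$ will be used.

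Next I would set $j_! := f_!\circ h_!\circ g^*$, which is legitimate since $f_!$ and $h_!$ exist ($f,h$ being open) and which is a composite of sup-lattice homomorphisms, hence join-preserving. To see that $j_!\dashv j^*$ it suffices, both maps being monotone, to verify the unit and counit inequalities. For the counit, $j_!\,j^* = f_!h_!g^*j^* = f_!h_!h^*f^* \le f_!f^* \le \ident$, using $g^*j^*=h^*f^*$ and the counits of the adjunctions $h_!\dashv h^*$ and $f_!\dashv f^*$. For the unit, $j^*\,j_! = j^*f_!h_!g^* = g_!h^*h_!g^* \ge g_!g^* = \ident$, using Beck--Chevalley, the unit $h^*h_!\ge\ident$, monotonicity of $g_!$, and $g_!g^*=\ident$. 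Thus $j^*$ has a left adjoint and $j$ is semiopen.

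It then remains to verify the Frobenius condition for $j$. For $c\in C$ and $d\in D$ I would compute $j_!(c\wedge j^*d)$ by pushing the meet through the formula: $g^*$ preserves meets and $g^*j^*=h^*f^*$, so $g^*(c\wedge j^*d)=g^*(c)\wedge h^*f^*(d)$; applying Frobenius for $h$ gives $h_!\big(g^*(c)\wedge h^*f^*(d)\big)=h_!g^*(c)\wedge f^*(d)$; and applying Frobenius for $f$ gives $f_!\big(h_!g^*(c)\wedge f^*(d)\big)=f_!h_!g^*(c)\wedge d = j_!(c)\wedge d$. Hence $j$ is open.

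The computations are routine once the right candidate is in hand, so the only real obstacle is guessing the formula $j_! = f_!h_!g^*$ and recognizing exactly which hypotheses each step consumes: openness of $h$ and $f$ is needed for the existence of the direct images and for the two Frobenius steps, whereas the surjectivity of $g$ (through $g_!g^*=\ident$) is precisely what powers the unit inequality and so cannot be dropped.
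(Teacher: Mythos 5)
Your proposal is correct and follows essentially the same route as the paper: the same candidate $j_! = f_!\circ h_!\circ g^*$, the same use of Lemma~\ref{lemma:BCcondition} (Beck--Chevalley) for the unit, the identity $g_!\circ g^*=\ident$ from surjectivity of $g$, and the same two-step Frobenius computation. The only (harmless) divergence is that you observe the counit needs just $f_!\circ f^*\le\ident$, whereas the paper invokes surjectivity of $f$ to get equality --- a stronger fact than required at that step.
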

\begin{proof}
Let us prove that the direct image of $j$ is given by $\phi:=f_!\circ h_!\circ g^*$. The unit of the adjuction follows from the derivation
\begin{align*}
j^*\circ \phi &= j^*\circ (f_!\circ h_!\circ g^*)=(j^*\circ f_!)\circ(h_!\circ g^*)\\
&=g_!\circ h^*\circ h_!\circ g^*&\text{(by Lemma~\ref{lemma:BCcondition})}\\
&\geq g_!\circ g^*&\text{($h^*\circ h_!\geq \ident$)}\\
&= \ident\;,&\text{($g$ is surjective)}
\end{align*}
and, for the co-unit, we have
\begin{align*}
\phi\circ j^*&=(f_!\circ h_!\circ g^*)\circ j^*=(f_!\circ h_!)\circ(g^*\circ j^*)\\
&=(f_!\circ h_!)\circ(h^*\circ f^*)& \text{($g^*\circ j^*=h^*\circ f^*$)}\\
&\le f_!\circ f^*&\text{($h_!\circ h^*\le \ident$)}\\
&= \ident\;.&\text{($f$ is surjective)}
\end{align*}
Hence, $j_!=\phi$. Finally the Frobenius condition follows, for all $c\in C$ and $d\in D$, from:
\begin{align*}
j_!(c\wedge j^*(d))&=(f_!\circ h_!\circ g^*)(c\wedge j^*(d))\\
&=(f_!\circ h_!)(g^*(c)\wedge g^*(j^*(d)))\\\
&=(f_!\circ h_!)(g^*(c)\wedge h^*(f^*(d)))& \text{($g^*\circ j^*=h^*\circ f^*$)}\\
&=f_!(h_!(g^*(c))\wedge f^*(d))& \text{[by \eqref{preliminaries, eq: frobeniuscondition} of $h$]}\\
&=f_!(h_!(g^*(c)))\wedge d& \text{[by \eqref{preliminaries, eq: frobeniuscondition} of $f$]}\\
&=j_!(c)\wedge d\;.
\end{align*}

\end{proof}

The product of $X$ and $Y$ in $\Loc$ is $X\times Y$, and it coincides with the tensor product $X\otimes Y$ in $\SL$. If $h_1,h_2: L\to M$ are frame homomorphisms consider
\[  E=E(h_1,h_2)=\{  x\in L \st\ h_1(x)=h_2(x) \}\;.  \]
Obviously $E$ is a subframe of $L$ and the embedding $j:E\to L$ has the property that whenever $h_1\circ g=h_2\circ g$ for a frame homomorphism $g: N\to L$ then we have (precisely one) frame homomorphism $\bar{g}: N\to E$ with $j\bar{g}=g$ (namely, the one given by $\bar{g}(x)=g(x)$).
Consequently, the category \Loc\ has coequalizers: if $f_1.f_2: M\to L$ are localic maps, the coequalizer can be obtained as 
\[   \pi: L\to L/M    \]
given by $\pi(x)=\upsilon_{L/M}(x)$ where $\upsilon$ is the corresponding nucleus (see, \cite[III.4.3]{picadopultr}).

To conclude this paragraph we prove the following fact of pullbacks of co-equalizer of locales:
\begin{lemma}\label{lem:isomreflex}
Let the following be a pullback diagram in $\Loc$ where $\pi_1$ is an isomorphism:  
\begin{equation}\label{eq:isomreflex}
\xymatrix{
X\times_B Y\ar@{->>}[rr]^-{\pi_2}\ar[d]^{\cong}_{\pi_1}&&Y\ar[d]^p\\
X\ar[rr]_-q&& B
}
\end{equation}
and $\pi_2$ is an open surjection and $q$ is a co-equalizer. Then $p$ is an isomorphism.
\end{lemma}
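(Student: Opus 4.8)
The plan is to exploit the isomorphism $\pi_1$ in order to collapse the pullback to a single open surjection $X\to Y$, and then to manufacture a two-sided inverse of $p$ from the coequalizer property of $q$. Concretely, I would set $s:=\pi_2\circ\pi_1^{-1}\colon X\to Y$. Since $\pi_1^{-1}$ is an isomorphism and $\pi_2$ is an open surjection, $s$ is again an open surjection; commutativity of \eqref{eq:isomreflex} gives $p\circ s=p\circ\pi_2\circ\pi_1^{-1}=q\circ\pi_1\circ\pi_1^{-1}=q$, and likewise $s\circ\pi_1=\pi_2$. The whole statement is thereby reformulated as: given an open surjection $s\colon X\to Y$ and a coequalizer $q\colon X\to B$ with $q=p\circ s$, show that $p$ is an isomorphism. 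As $q$ and $s$ are (regular) epimorphisms, $p$ is automatically epic, so the real content is that $p$ is monic, after which the inverse is produced formally.

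To prove $p$ monic I would pass to its kernel pair $\mathrm{pr}_1,\mathrm{pr}_2\colon Y\times_B Y\to Y$ and show the two projections agree. There is a unique map $\Phi\colon X\times_B Y\to Y\times_B Y$ with $\mathrm{pr}_1\circ\Phi=s\circ\pi_1$ and $\mathrm{pr}_2\circ\Phi=\pi_2$ (well defined since both composites equal $q\circ\pi_1$ after applying $p$); one checks that $\Phi$ is precisely the pullback of $s$ along $\mathrm{pr}_1$, hence an open surjection, in particular an epimorphism. Because $s\circ\pi_1=\pi_2$, both $\mathrm{pr}_1\circ\Phi$ and $\mathrm{pr}_2\circ\Phi$ equal $\pi_2$, so $\mathrm{pr}_1\circ\Phi=\mathrm{pr}_2\circ\Phi$; cancelling the epimorphism $\Phi$ yields $\mathrm{pr}_1=\mathrm{pr}_2$, i.e. $p$ is monic. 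The only external input is pullback-stability of the open surjection $s$: openness of $\Phi$ is immediate from Lemma~\ref{lemma:BCcondition}, and its surjectivity follows from the Beck--Chevalley identity $\mathrm{pr}_1^*\circ s_!=\Phi_!\circ\pi_1^*$ evaluated at the top element, using the criterion that an open map is a surjection exactly when its direct image preserves the top.

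With $p$ monic the inverse is read off from the coequalizer. Writing $q$ as the coequalizer of a parallel pair $u,v\colon Z\to X$, we have $p\circ(s\circ u)=q\circ u=q\circ v=p\circ(s\circ v)$, whence monicity of $p$ gives $s\circ u=s\circ v$; so $s$ factors through the coequalizer as $s=r\circ q$ for a unique $r\colon B\to Y$. Then $p\circ r\circ q=p\circ s=q$ forces $p\circ r=\ident$ (as $q$ is epic), and $r\circ p\circ s=r\circ q=s$ forces $r\circ p=\ident$ (as $s$ is epic). Hence $r$ is a two-sided inverse of $p$ and $p$ is an isomorphism.

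I expect the main obstacle to be the monicity step, and specifically extracting surjectivity of $\Phi$: its openness is handed to us by Lemma~\ref{lemma:BCcondition}, but surjections are not pullback-stable in \Loc\ in general, so the argument genuinely relies on $s$ being \emph{open} (it is here that Beck--Chevalley and the top-preservation criterion are needed). Everything after monicity is a formal manipulation of the universal property of the coequalizer together with the two cancellation laws for $q$ and $s$.
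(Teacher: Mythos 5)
Your proof is correct, but it takes a genuinely different route from the paper's. The paper never mentions kernel pairs or monicity: writing $u_1,u_2$ for the parallel pair coequalized by $q$, it feeds the cone $(u_1,\,s\circ u_2)$, where $s=\pi_2\circ\pi_1^{-1}$, into the universal property of the pullback \eqref{eq:isomreflex}; since $\pi_1$ is an isomorphism the induced map into $X\times_B Y$ must be $\pi_1^{-1}\circ u_1$, which forces $s\circ u_1=s\circ u_2$ in one step. It then factors $s=c\circ q$ through the coequalizer and cancels the epimorphisms $q$ and $\pi_2$, exactly as in your final paragraph. The notable difference is that the paper's argument uses only the \emph{surjectivity} of $\pi_2$ (and of $q$), never openness, so it actually proves the lemma with ``open'' deleted from the hypotheses; your detour through monicity of $p$ is precisely where openness becomes essential, since you need Joyal--Tierney pullback-stability of open surjections (your use of Lemma~\ref{lemma:BCcondition}, Beck--Chevalley, and the criterion that an open map is surjective iff its direct image preserves the top element is correct, as is the pasting-of-pullbacks identification of $\Phi$). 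What your route buys is the intermediate fact that $p$ is monic, established without touching the coequalizer hypothesis at all --- a descent-flavoured statement of independent interest --- at the price of heavier machinery and a stronger hypothesis. One caveat of wording rather than of substance: your ``reformulation'' (open surjection $s$, coequalizer $q=p\circ s$, conclude $p$ iso) is false as a standalone statement --- take $s=\ident_X$ and $p=q$ any non-invertible coequalizer --- because it forgets the relation $s\circ\pi_1=\pi_2$ encoding that \eqref{eq:isomreflex} is a pullback with $\pi_1$ invertible; your actual argument does use that relation in the monicity step, so the proof itself has no gap.
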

\begin{proof}
Let $A$ be a locale and  $u_1,u_2: A\to X$ be two map of locales, let us define a map of locales $y:=\pi_2\circ\pi^{*}_1\circ u_2$ which exists because $\pi_1$ is an isomorphism by assumption. Taking into account that $q$ is a co-equalizer we have
\[ p\circ y=p\circ \pi_2\circ \pi^{*}_1\circ u_2=q\circ \pi_1\circ \pi^*_1\circ u_2=q\circ u_2=q\circ u_1\;. \]
Since \eqref{eq:isomreflex} is a pullback, there exists a unique map of locales $b:A\to X\times_B Y$ such that $\pi_1\circ b=u_1$ and $\pi_2\circ b=y$. The former equation implies that $b=\pi^{*}_1\circ u_1$, combining this with the latter equation, we have
\[ \pi_2\circ \pi^{*}_1\circ u_1=y=\pi_2\circ\pi^{*}_1\circ u_2\;,  \] 
and since $q$ is the co-equalizer of $u_1$ and $u_2$, there is a unique map of locales  $c: B\to Y$ with $c\circ q=\pi_2\circ \pi^{*}_1$. Thus $p\circ c\circ q=q$, which implies that $p\circ c=\ident$ because $q$ is surjective. Furthermore, $c\circ p\circ \pi_2=c\circ q\circ \pi_1=\pi_2$ implies $c\circ p=\ident$ because $\pi_2$ is surjective. Therefore $p$ is an isomorphism.

\end{proof}

%%%%%%%%%%%%%%%%%%%%%%
%%%%%%%%%%%%%%%%%%%%%%

\paragraph{Groupoids and quantales.} 
A \emph{localic groupoid} is an internal groupoid in the category of locales $\Loc$. We denote the locales of objects and arrows of a groupoid $G$ respectively by $G_0$ and $G_1$,
\[
\xymatrix{
G=\quad G_2\ar[r]^-m&G_1\ar@(ru,lu)[]_i\ar@<1.2ex>[rr]^r\ar@<-1.2ex>[rr]_d&&G_0\ar[ll]|u
}
\]
where $G_2$ is the pullback of the $d$ and $r$ in $\Loc$. A localic groupoid is \emph{open} if $d$ is an open map, in which case, the \emph{range map} $r$ and the \emph{multiplication map} $m$ are open, as well. An \emph{\'etale groupoid} is an open groupoid such that $d$ is a local homeomorphism, in which case all the structure maps are local homeomorphisms. We shall denote the quantale of an open groupoid $G$ by $\opens(G)$, and describe it briefly, following the construction of \cite{QRnonuspp, QuijanoPhD}: 
Let $A$ be a locale. By an  \emph{$A$-$A$-bimodule} $M$ is meant a sup-lattice equipped with two unital (resp. left and right) $A$-module structures
\begin{equation}\label{AAbimodule} 
(a,m)\mapsto {\lres{a}{m}}\quad \text{and}\quad (a,m)\mapsto \rres{m}{a}\,,
\end{equation} 
satisfying the following additional condition, for all $a, b \in A$ and $m \in M$:
\begin{equation}\label{def:AAbimodules}
  \rres{(\lres{a}{m})}{b} = {\lres{a}{(\rres{m}{b})}}\,.  
\end{equation}
An \emph{$A$-$A$-quantale} $Q$ is just a semigroup in the monoidal category of $A$-$A$-bimodules, \ie\ 
$Q$ is an $A$-$A$-bimodule equipped with a quantale multiplication $(x, y)\mapsto xy$ satisfying the following additional conditions, for all $a \in A$ and $x, y \in Q$:
\begin{equation}\label{AAquantale}
 (\lres{a}{x})y=  {\lres{a}{(xy)}}\;,\ (\rres{x}{a})y =   x(\lres{a}{y})\;,\ \rres{(xy)}{a} =   x(\rres{y}{a})\,.  
\end{equation}
An $A$-$A$-quantale $Q$ is \emph{involutive} if it is an involutive semigroup, that is, $Q$ is endowed with a self-adjoint map (called the \emph{involution}) given by $a\mapsto a^{*}$ and it is required to satisfy, besides the standard conditions $x^{**}=x$ and $(xy)^{*}=y^{*}x^{*}$, the following two conditions:
\begin{equation}\label{involution}
 (\V_i x_i)^{*} =\V_i x^{*}_i\,,\ (\lres{a}{\rres{x}{b}})^{*} = {\lres{b}{\rres{x^{*}}{a}}}\;. 
\end{equation}
By a \emph{based quantale} will be meant an involutive quantale $Q$ equipped with the structure of an involutive $A$-$A$-quantale for some locale $A$.
A based quantale $Q$ is \emph{supported} if it is equipped with a sup-lattice homomorphism $\spp_Q: Q\rightarrow A$ satisfying the following conditions for all $x,y\in Q$:
\begin{equation}\label{sppq}
\spp_Q(1_Q)=1_A\;,\  \lres {\spp_Q(x)}{y} \leq xx^{*}y\,,\ \lres {\spp_Q(x)}{x}= x\,. 
\end{equation}

Let us denote by $\rs(Q)$ the set of \emph{right-sided} elements of $Q$, an element $q\in Q$ is right-sided if $q1_Q\leq q$. Similarly, the set of \emph{left-sided} elements of $Q$ is denoted by $\ls(Q)$ and the set of \emph{two-sided} elements is denoted by $\ts(Q)$. 
A support $\spp_Q$ is said to be \emph{equivariant} if 
\begin{equation}
\spp_Q(\lres{a}{x})=a\wedge \spp_Q(x)\;.
\end{equation}
Then, the mapping $A\to \rs(Q)$ defined by $x\mapsto {\lres{x}{1_Q}}$ is an order isomorphism whose inverse is the mapping $\rs(Q) \to A$ defined by $x\mapsto \spp(x)$. In particular $\rs(Q)\cong A$, i.e., $\rs(Q)$ is a locale. 
Any equivariant support is necessarily \emph{stable}, by which it is meant that the following equivalent conditions hold for all $x,y\in Q$;
\begin{equation}\label{supportstable}
\spp(xy)\le \spp(x)\;,\  \spp(x1_Q)\le\spp(x)\;,\ \spp(xy) = \spp(\rres{x}{\spp(y)})\;.
\end{equation}
A based quantale is \emph{stably supported} if it is equipped with a stable support.

By a \emph{based quantal frame} is meant a based quantale $Q$ such that for all $q,m_i\in Q$ and $a\in A$ the following properties hold:
\begin{equation}\label{qf3}
q \wedge \bigvee_i m_i = \bigvee_i q\wedge m_i\;,\ (\lres{a}{q})\wedge m = {\lres{a}{(q\wedge m)}}\;,\ m\wedge (\rres{q}{a}) = \rres{(q\wedge m)}{a}\;. 
\end{equation}
By a \emph{reflexive quantal frame} $(Q,\upsilon)$ will be meant an $A$-$A$-quantal frame equipped with a frame homomorphism $\upsilon: Q\to A$ satisfying, for all $a\in A$:
\begin{equation}\label{eq:reflexivity}
\upsilon(\lres{a}{1_Q})=a=\upsilon(\rres{1_Q}{a})\,.
\end{equation} 
The quantale multiplication has the following factorization in $\SL$
\[
\vcenter{\xymatrix{
 Q\otimes Q \ar@{->>}[d]_{\pi}  \ar[dr]^{\mu} \\
 Q\otimes_{A} Q  \ar[r]_-{\mu_A} & Q\,.
}
}
\]
By a \emph{multiplicative quantal frame} is meant an $A$-$A$-quantale frame such that the right adjoint of $\mu_A$ preserves joins. 
Let $(Q,\spp,\upsilon)$ be a multiplicative equivariantly supported reflexive quantal frame. We say that $Q$  satifies \emph{unit laws} if moreover the following condition holds for all $a\in Q$:
\begin{equation}\label{unitlaws}
\bigvee_{xy\leq a} (\lres{\upsilon(x)}{y}) = a\,.
\end{equation} By a \emph{groupoid quantale} $Q$ will be meant a multiplicative equivariantly supported reflexive quantal frame that satisfies unit laws and moreover satisfies the following condition, which is referred to as the \emph{inverse laws}, for all $a,x\in Q$:
\begin{equation}\label{inverselaws}
\lres{\upsilon(a)}{1_Q} = \bigvee_{xx^*\leq a} x\,.
\end{equation}  
The groupoid quantales $Q$ are precisely the quantales of the form $Q\cong \opens(G)$ for an open groupoid $G$ and the class of unital equivariantly supported reflexive quantal frames satisfying \emph{inverse laws} corresponds with the class of \emph{inverse quantal frames}. 
Recall that by an inverse quantal frame $Q$ is meant an unital stable quantal frame, satisfying
\begin{equation}
\bigvee \ipi(Q)= 1_Q\,,
\end{equation}
where $\ipi(Q) = \{a \in Q\ \vert\ a^*a\vee a^*a\leq e\}$ (partial units of $Q$). The inverse quantal frames $Q$ are exactly the quantales of the form $Q = \mathcal{O}(G)$ for an \'etale groupoid $G$  \cite[Theo. 5.11]{Re07}.

\paragraph{Actions of open groupoids.}
Let $G$ be an open groupoid. A \emph{(left) $G$-locale} is a triple $(X,\act,p)$ where $X$ is a locale together with a map $p: X\to G_0$ (called \emph{achor map or projection map}) and a map of locales (called the \emph{action}) $\act: G_1\times_{G_0} X \to X$ such that the following diagrams commute:
\begin{description}
\item{ \textbf{Pullback}:}\label{defactiongpd1}
\[
\vcenter{\xymatrix{
 G_1\times_{G_0} X  \ar[d]_{\pi_1} \ar[r]^-{\act} & X \ar[d]^{p}  \\
 G_1  \ar[r]_-{d} & G_0 }
}
\]
\item{\textbf{Associativity}:}
\[
\vcenter{\xymatrix{
 G_1\times_{G_0} (G_1\times_{G_0} X) \ar[d]_{\cong} \ar[r]^-{\ident_{G_1}\times \act}  & G_1\times_{G_0} X \ar[dd]^{\act} \\
 G_2\times_{G_0} X  \ar[d]_{m\times \ident_X} \\
 G_1\times_{G_0} X  \ar[r]_-{\act} & X }
}\label{defactiongpd2}
\]
\item{\textbf{Unitary}:}
\[
\vcenter{\xymatrix{
 & G_1\times_{G_0} X \ar[dr]^{\act} \\
 X \ar[ur]^{\langle u\circ p, \ident_X \rangle} \ar[rr]_{ \cong}  && X }
}\label{defactiongpd3}
\]
\end{description}
Let $(X, \act_X, p_X)$ and $(Y, \act_Y, p_Y)$ be $G$-locales or simply $X$ and $Y$ when no confusion arise, for any 
$G$-locale $(X,\act, p)$, the action $\act$ is necessarily open because the diagram~\textbf{Pullback} is, in fact, a pullback and $d$ is an open map. By an \emph{open $G$-locale} is meant a $G$-locale $(X,\act,p)$ such that $p$ is an open map of locales. Moreover, if $p$ is a surjective map of locales, we say that $X$ is a \emph{fully open $G$-locale}. An \emph{equivariant map} from $X$ to $Y$ is a map $f:X\to Y$ in the slice category $\Loc/G_0$ that commutes with the actions, 
\begin{equation}\label{diag:equivariantmap} 
\xymatrix{
G_1\times_{G_0}X \ar_{\act}[d] \ar^-{\ident_{G_1}\times f}[rr]&& G_1\times_{G_0}Y \ar^{\bct}[d]\\
X \ar_{p_X}[dr] \ar^{f}[rr] && Y\ar^{p_Y}[dl]\\
& G_0
}
\end{equation}
we shall call to the category of $G$-locales and equivariant maps between them as \emph{$G$-\Loc}. Furthermore, The categories of left and right $G$-\Loc\ are isomorphic (see, \cite[Lemma 2.1]{Funct2}). Similarly, we define  \emph{\textit{open} $G$-\Loc}\ and \emph{\textit{fully open} $G$-\Loc}, the categories of open $G$-locales and fully open $G$-locales with equivariant maps, respectively.

\paragraph{$\opens(G)$-modules.}
Let $G$ be an open groupoid. We shall denote by $X$ the $\opens(G)$-module which is obtained from a $G$-locale $X$, as follows; taking into account that $G_1\times_{G_0} X$ is a locale, a quotient  $G_1\otimes_{G_0} X$ of the tensor product $G_1\otimes X$, we obtain a sup-lattice homomorphism by composing with the direct image of the action:
\[
\vcenter{\xymatrix{
 G_1\otimes X   \ar@{->>}[r]  & G_1\otimes_{G_0} X \ar[r]^-{\act_!} & X
 }
}
\]
showing that this defines an action of $\opens(G)$ on $X$ (a left quantale module). Hence, if $G$ is an open groupoid, in which case as we have seen before the actions are open maps, the
following diagram in $\SL$ also commutes (cf, \cite{JT} Proposition V.4.1):
\[ \xymatrix{
\opens(G_1)\otimes_{\opens(G_0)}X \ar_{\act_!}[d] && \opens(G_1)\otimes_{\opens(G_0)}Y \ar^-{\ident_{\opens(G_1)}\otimes f^*}[ll] \ar^{\bct_!}[d]\\
X  && Y\ar^-{f^*}[ll]
}
\]
it means that $f^*$ commutes with the actions of $\opens(G)$ on $X$ and $Y$ respectively, \ie, it is a homomorphism of $\opens(G)$-modules.

Furthermore, If $X$ is a $G$-locale with open anchor map $p:X\to G_0$. Then $X$ is an $\opens(G_0)$-module by change of "ring" along the inverse image $p^*:\opens(G_0)\to X$. Considering the tensor product $\opens(G)\otimes_{\opens(G_0)} X$, the associativity of the action $\opens(G)\otimes X\to X$ yields a factorization:
\[ \xymatrix{
\opens(G)\otimes X\ar@{->>}[rr]\ar[drr] && \opens(G)\otimes_{\opens(G_0)} X \ar[d]^{\alpha}\\
&& X\;,
}
\]  
the right adjoint of the sup-lattice homomorphism $\alpha:\opens(G)\otimes_{\opens(G_0)} X\to X$ is given by
\begin{align}\label{eq:rightadjointalpha}
\alpha_*(x) = \V \{ q\otimes y\in \opens(G)\otimes_{\opens(G_0)} X \st \alpha(q,y)=q\cdot y\leq x  \}\;.
\end{align}

\paragraph{Groupoid bi-locales.}
Let $G$ and $H$ be two open groupoids. A \emph{$G$-$H$-bilocale} is a locale $X$, equipped with a left $G$-locale structure $(p, \act)$ and a right $H$-locale structure $(q, \bct)$ such that the following diagrams in $\Loc$ are commutative:
\begin{enumerate}
\item \textbf{$q$ is invariant
under the action of $G$} \[
\vcenter{\xymatrix{
G_1\times_{G_0} X \ar[rr]^-{\act} \ar[d]_{\pi_2} && X \ar[d]_-{q} \\
X \ar[rr]_{q}  && H_0   }
}
\]
\item \textbf{$p$ is invariant
under the action of $H$}\[
\vcenter{\xymatrix{
X\times_{H_0} H_1 \ar[rr]^-{\bct} \ar[d]_{\pi_1} && X \ar[d]_-{p} \\
X \ar[rr]_{p}  && G_0   }
}
\]
\item \textbf{Associativity}\[
\vcenter{\xymatrix{
G_1\times_{G_0} X\times_{H_0} H_1 \ar[rr]^-{\act\times \ident_{H_1}} \ar[d]_{\ident_{G_1}\times b} && X\times_{H_0} H_1 \ar[d]_{\bct} \\
G_1\times_{G_0} X \ar[rr]_-{\act}  && X   }
}
\]
\end{enumerate}
we shall often use the notation ${_G X_H}$ to indicate that $X$ is a $G$-$H$-bilocale.
A \emph{map of bilocales} $f: {_G X_H}\to  {_G Y_H}$ is a map of locales that is both a map of the left $G$-locales and a map of right $H$-locales. We will denote by $G$-$H$-$\Loc$ the category of bilocales. The maps of bilocales are the $2$-cells of a bicategory, denoted by $\grpd$,  whose 0-cells are the open groupoids and whose 1-cells are the $G$-$H$-bilocales. The composition of 1-cells is defined bu the tensor product: given 1-cells $_G X_H$ and $_H Y_K$ we define $Y\circ X=X\otimes_H Y$ defined by the coequalizer of
\begin{equation}\label{eq: tensorproductgloc}
\xymatrix{
X\times_{H_0} H_1\times_{H_0} Y\ar@<0.7ex>[rr]^-{\langle a\circ \pi_{12}, \pi_3\rangle}\ar@<-0.7ex>[rr]_-{\langle \pi_1,b\circ \pi_{23}\rangle}&& X\times_{H_0} Y \ar@{->>}[r]& X\otimes_{H} Y\,.
}
\end{equation}

%%%%%%%%%%%
%%%%%%%%%%%

\section{Principal $G$-bundles}\label{section:principalbundles}

Let $G$ be an open groupoid and $M$ be a locale. A \emph{(left) $G$-bundle over $M$} is a (left) $G$-locale $(X, \act, p)$ endowed with a map of locales $\pi: X\to M$, such that the following diagram:
\begin{equation}\label{diag:invarianceofaction}
\vcenter{\xymatrix{
G_1\times_{G_0} X\ar[r]^-{\act} \ar[d]_{\pi_2}  &X\ar[d]^{\pi}\\
X\ar[r]_{\pi} & M\;.
}
}
\end{equation}
is commutative in \Loc. A $G$-bundle over $M$ is called a \emph{principal $G$-bundle over $M$} if the \emph{pairing map}
\begin{equation}\label{principaliso}
\langle \act, \pi_2 \rangle: G_1\times_{G_0} X \to X\times_{M} X
\end{equation}
is an isomorphism of locales and the map $\pi:X\to M$ is an open surjection. Similarly, we define (right) $G$-bundle and a principal (right) $G$-bundle (with pullback over the range map $r$) over $M$.
Recall that, if $G$ is an open groupoid and $X$ is a (left) $G$-locale, the \emph{orbit locale} of the action $X/G$ can be construct as the coequalizer in \Loc:
\begin{equation}
\xymatrix{
G_1\times_{G_0} X\ar@<0.7ex>[rr]^-{\act}\ar@<-0.7ex>[rr]_-{\pi_2}&& X \ar@{->>}^-{\pi}[r]& X/G.
}
\end{equation}
Moreover, for any $X$ principal $G$-bundle over $M$ we have $M\cong X/G$ (see, \cite[Lemma 4.2]{Funct2}).

\paragraph{Point-set reasoning}
The following provides an example of how topological arguments using points can be translated to localic arguments \cite{Vi07} and it was already introduced in \cite{QuijanoPhD}. Let $G$ be an open groupoid and $X$ a principal $G$-bundle with action $\act$. We shall usually denote the inverse of the isomorphism $\langle \act,\pi_2\rangle$ of \eqref{principaliso} by $\langle \theta,\pi_2\rangle$, 
\[
\xymatrix{
X\times_{M} X\ar@<1.0ex>[rr]_{\cong}^{ \langle \theta,\pi_2 \rangle}&& G_1\times_{G_0} X \ar@<1.0ex>[ll]^{ \langle \act,\pi_2 \rangle}\;,
}
\]
where $\theta:X\times_{M} X\to G_1$ is a map of locales. Note that, if $G$ is a topological groupoid and $X$ a topological principal $G$-bundle, $\theta(x,x')$ is, given some pair $(x,x')\in X\times_M X$, the unique arrow $g\in G_1$ such that $gx'=x$ (where we write $g\cdot x$ instead of $\act(g,x)$ for the action of an arrow $g\in G_1$ on an element $x\in X$). It follows, in particular, that $\theta(x,x)$ is the identity arrow $u(p(x))\in G_1$. Such properties are less easy to describe in the case of localic groupoids, but we obtain similar statements by replacing elements such as $x\in X$ by maps $\boldsymbol x$ into $X$, and expressions such as $\theta(x,x')$ by $\theta\circ\langle\boldsymbol x,\boldsymbol x'\rangle$, as follows:

\begin{lemma}\label{localicreasoningwithglobalpoints}
Let $G$ be an open groupoid and $(X,p,\act,\pi)$ a principal $G$-bundle over $M$. For all locales $Z$ and all maps $\boldsymbol g:Z\to G_1$ and $\boldsymbol x,\boldsymbol x':Z\to X$ such that $\pi\circ\boldsymbol x=\pi\circ\boldsymbol x'$ we have
\begin{enumerate}
\item $\theta\circ \langle \boldsymbol x, \boldsymbol x' \rangle=i\circ \theta\circ \langle \boldsymbol x', \boldsymbol x \rangle$.
\item $\theta\circ \langle \act\circ \langle \boldsymbol g, \boldsymbol x \rangle, \boldsymbol x' \rangle=m\circ \langle  \boldsymbol g, \theta\circ \langle \boldsymbol x, \boldsymbol x' \rangle \rangle$.
\end{enumerate}
\end{lemma}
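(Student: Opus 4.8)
The whole proof rests on one observation about the inverse pairing. Since $\langle\theta,\pi_2\rangle$ is inverse to the isomorphism $\langle\act,\pi_2\rangle$ of \eqref{principaliso}, the identity $\langle\act,\pi_2\rangle\circ\langle\theta,\pi_2\rangle=\ident$ gives, on first components, $\act\circ\langle\theta,\pi_2\rangle=\pi_1\colon X\times_M X\to X$. Precomposing with a generalized point $\langle\boldsymbol y,\boldsymbol y'\rangle\colon Z\to X\times_M X$ --- which exists exactly when $\pi\circ\boldsymbol y=\pi\circ\boldsymbol y'$ --- yields the \emph{evaluation identity}
\[
\act\circ\langle\,\theta\circ\langle\boldsymbol y,\boldsymbol y'\rangle,\ \boldsymbol y'\,\rangle=\boldsymbol y\,,
\]
the point-free rendering of ``$\theta(y,y')\cdot y'=y$''. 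Conversely, because $\langle\act,\pi_2\rangle$ is monic, it supplies a uniqueness principle: if some $\boldsymbol k\colon Z\to G_1$ makes $\langle\boldsymbol k,\boldsymbol y'\rangle$ a legitimate map into $G_1\times_{G_0}X$ and satisfies $\act\circ\langle\boldsymbol k,\boldsymbol y'\rangle=\boldsymbol y$, then necessarily $\boldsymbol k=\theta\circ\langle\boldsymbol y,\boldsymbol y'\rangle$. I plan to prove both parts by producing the right-hand side as such a $\boldsymbol k$.

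For part (1) I put $\boldsymbol h'=\theta\circ\langle\boldsymbol x',\boldsymbol x\rangle$ and $\boldsymbol k=i\circ\boldsymbol h'$, and verify $\act\circ\langle\boldsymbol k,\boldsymbol x'\rangle=\boldsymbol x$. The evaluation identity for $\langle\boldsymbol x',\boldsymbol x\rangle$ reads $\act\circ\langle\boldsymbol h',\boldsymbol x\rangle=\boldsymbol x'$, so I substitute $\boldsymbol x'$ and rewrite by the associativity square:
\[
\act\circ\langle\,i\boldsymbol h',\ \act\circ\langle\boldsymbol h',\boldsymbol x\rangle\,\rangle=\act\circ\langle\,m\circ\langle i\boldsymbol h',\boldsymbol h'\rangle,\ \boldsymbol x\,\rangle\,.
\]
The groupoid inverse law collapses $m\circ\langle i\boldsymbol h',\boldsymbol h'\rangle$ to $u\circ d\circ\boldsymbol h'$, and since $\act\circ\langle\boldsymbol h',\boldsymbol x\rangle$ is defined we have $d\circ\boldsymbol h'=p\circ\boldsymbol x$, so this becomes $\act\circ\langle u\circ p\circ\boldsymbol x,\ \boldsymbol x\rangle=\boldsymbol x$ by the unitary square. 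The uniqueness principle then delivers $\theta\circ\langle\boldsymbol x,\boldsymbol x'\rangle=i\circ\boldsymbol h'$, which is (1).

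For part (2) I put $\boldsymbol h=\theta\circ\langle\boldsymbol x,\boldsymbol x'\rangle$ (so that $\act\circ\langle\boldsymbol h,\boldsymbol x'\rangle=\boldsymbol x$ by evaluation) and $\boldsymbol k=m\circ\langle\boldsymbol g,\boldsymbol h\rangle$, and set $\boldsymbol y=\act\circ\langle\boldsymbol g,\boldsymbol x\rangle$, $\boldsymbol y'=\boldsymbol x'$. Invariance \eqref{diag:invarianceofaction} gives $\pi\circ\boldsymbol y=\pi\circ\boldsymbol x=\pi\circ\boldsymbol x'$, so $\langle\boldsymbol y,\boldsymbol y'\rangle$ is a genuine point of $X\times_M X$. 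A single use of associativity then gives
\[
\act\circ\langle\boldsymbol k,\boldsymbol x'\rangle=\act\circ\langle\,\boldsymbol g,\ \act\circ\langle\boldsymbol h,\boldsymbol x'\rangle\,\rangle=\act\circ\langle\boldsymbol g,\boldsymbol x\rangle=\boldsymbol y\,,
\]
so the uniqueness principle yields $\theta\circ\langle\boldsymbol y,\boldsymbol x'\rangle=m\circ\langle\boldsymbol g,\boldsymbol h\rangle$, which is (2).

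The conceptual content is just a transcription of the point-set computations sketched before the lemma, and once the two reductions above are in place the identities fall out formally from associativity, the unit and inverse laws, and monicity of the pairing isomorphism. I expect the only delicate part to be the bookkeeping of fibre-product compatibilities: checking that each bracket $\langle-,-\rangle$ I write is actually defined, i.e.\ that the relevant sources and targets agree (for instance $d\circ i\boldsymbol h'=r\circ\boldsymbol h'=p\circ\boldsymbol x'$ in (1), and $d\circ\boldsymbol g=p\circ\boldsymbol x=r\circ\boldsymbol h$ so that $m\circ\langle\boldsymbol g,\boldsymbol h\rangle$ is composable in (2)). These compatibilities are exactly what the Pullback square of the action and the source/target identities for $m$, $i$, $u$ provide, so they are routine, though they must be recorded with care.
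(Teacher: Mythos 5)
Your proof is correct, but it is organized differently from the paper's, and for part (1) it is a genuinely different argument. Your two tools --- the evaluation identity $\act\circ\langle\theta,\pi_2\rangle=\pi_1$ (the first component of $\langle\act,\pi_2\rangle\circ\langle\theta,\pi_2\rangle=\ident$) and the uniqueness principle coming from monicity of the pairing isomorphism --- are exactly right, and the composability bookkeeping you flag does go through. For part (2) the paper does essentially what you do: it derives equation \eqref{eq:mthetaa} from associativity and principality and then extracts the first component by composing with $\pi_1$; your uniqueness principle is the same cancellation, just packaged as an implication rather than as a chain of equalities. For part (1), however, the paper argues quite differently: it invokes the isomorphism between the categories of left and right $G$-locales and asserts, ``by principality'', the commutativity of a diagram amounting to $\theta\circ\langle\pi_2,\pi_1\rangle=i\circ\theta$, from which (1) follows by precomposition with $\langle\boldsymbol x',\boldsymbol x\rangle$. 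You instead deduce (1) from the internal groupoid axioms (inverse law, unit law) together with associativity of the action; this is more elementary, entirely self-contained, and in fact supplies a verification of a step the paper leaves as an assertion, whereas the paper's route is shorter and explains conceptually why the flip of $X\times_M X$ corresponds to inversion in $G_1$. One caveat: you implicitly use the convention in which $G_1\times_{G_0}X$ is the pullback over $d$ (so composability of $\langle\boldsymbol k,\boldsymbol y'\rangle$ reads $d\circ\boldsymbol k=p\circ\boldsymbol y'$ and $p\circ\act=r\circ\pi_1$), while the paper takes the pullback of $r$ and $p$ with $p\circ\act=d\circ\pi_1$ in its \textbf{Pullback} diagram; consequently identities such as $d\circ i\circ\boldsymbol h'=r\circ\boldsymbol h'=p\circ\boldsymbol x'$ and $m\circ\langle i\circ\boldsymbol h',\boldsymbol h'\rangle=u\circ d\circ\boldsymbol h'$ need $d$ and $r$ interchanged to match the paper's conventions --- a purely notational transposition that does not affect the validity of your argument.
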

\begin{proof}
1.  The categories of left $G$-locales and right $G$-locales are isomorphic (see, \cite[Lemma 2.1]{Funct2}). Moreover, by principality, the lower rectangle of the following diagram:
\[\xymatrix{ 
G_1\times_{G_0} X\ar[rr]^{\cong} \ar@/_4pc/[dd]_{\pi_1}   && X\times_{G_0} G_1 \ar@/^4pc/[dd]^{\pi_2}\\
X\times_{M} X\ar[u]^{\langle \theta,\pi_2\rangle}_{\cong} \ar[d]^{\theta}\ar[rr]^{\langle \pi_2,\pi_1 \rangle}_\cong  &&  X\times_{M} X\ar[u]_{\langle \pi_1,\theta\rangle}^{\cong}\ar[d]^{\theta}\\ 
G_1\ar[rr]_{i}^{\cong} && G_1
}
\]
is commutative. Hence
\begin{align*}
\theta\circ \langle \boldsymbol x, \boldsymbol x' \rangle &= \theta\circ \langle \pi_2,\pi_1 \rangle\circ \langle \boldsymbol x', \boldsymbol x \rangle\\
&= i\circ \theta \circ \langle \boldsymbol x', \boldsymbol x \rangle\;.
\end{align*}

2. The associativity law and the principality of $X$  give us the following equality:
\begin{equation}\label{eq:mthetaa}
(m\times \ident_X)\circ (\ident_{G_1}\times \langle \theta,\pi_2 \rangle)\circ \langle \boldsymbol g,\langle \boldsymbol x, \boldsymbol x' \rangle \rangle= \langle \theta,\pi_2 \rangle\circ (\act \times \ident_X)\circ \langle \boldsymbol g,\langle \boldsymbol x, \boldsymbol x' \rangle \rangle\;.
\end{equation}
Then,
\begin{align*}
m\circ \langle \boldsymbol g, \theta \circ \langle \boldsymbol x, \boldsymbol x' \rangle\rangle  &= \pi_1\circ \langle m\circ \langle \boldsymbol g, \theta\circ \langle \boldsymbol x, \boldsymbol x' \rangle \rangle, \boldsymbol x' \rangle\\
&= \pi_1\circ (m\times \ident_X)\circ \langle \boldsymbol g, \langle \theta\circ \langle \boldsymbol x, \boldsymbol x' \rangle, \boldsymbol x' \rangle \rangle\\
 &= \pi_1\circ (m\times \ident_X)\circ (\ident_{G_1}\times \langle \theta,\pi_2 \rangle)\circ \langle \boldsymbol g,\langle \boldsymbol x, \boldsymbol x' \rangle \rangle\\
&= \pi_1\circ \langle \theta,\pi_2 \rangle\circ (\act \times \ident_X)\circ \langle \boldsymbol g,\langle \boldsymbol x, \boldsymbol x' \rangle \rangle\quad \text{[by \eqref{eq:mthetaa}]}\\
&= \pi_1\circ \langle \theta,\pi_2 \rangle\circ \langle \act\circ \langle \boldsymbol g, \boldsymbol x \rangle, \boldsymbol x' \rangle\\
&= \pi_1\circ \langle \theta\circ \langle \act\circ \langle \boldsymbol g, \boldsymbol x \rangle, \boldsymbol x' \rangle,  \boldsymbol x' \rangle\\
&= \theta\circ \langle \act\circ \langle \boldsymbol g,\boldsymbol x \rangle, \boldsymbol x' \rangle\;.
\end{align*}

\end{proof}

%%%%%%%%%%%%%%%%%
%%%%%%%%%%%%%%%%%

\paragraph{Pullbacks of principal $G$-bundles.}

The purpose here is to provide a sequence of results regarding pullbacks of principal bundles of open groupoids, in order to show that the composition of principal bundles is again a principal bundle.

\begin{lemma}\label{preliminaries, lemma:pullbackprincipal}
Let $G$ be an open groupoid, $X$ be a principal $G$-bundle over $M$ and $f: \intg M\to M$ be a map of locales. Then there exists a principal $G$-bundle $\intg X$ over $\intg M$ together with an equivariant map $\intg f: \intg X\to X$ such that
\[
\vcenter{\xymatrix{
\intg X\ar[r]^-{\intg f} \ar[d]_{\intg \pi}  & X \ar@{->>}[d]^{\pi}\\
\intg M\ar[r]_-{f}& M\;.
}
}
\]
Furthermore the following universal property holds: for any principal $G$-bundle $X'$ over $\intg M$ and an equivariant map $f':X'\to X$, there is a unique isomorphism of principal $G$-bundles $\varphi: X'\to \intg X$ with $\intg f\circ \varphi=f'$.
\end{lemma}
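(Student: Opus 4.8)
The plan is to take $\intg X$ to be the pullback $\intg M\times_M X$ in $\Loc$, with its two projections $\intg\pi\colon\intg X\to\intg M$ and $\intg f\colon\intg X\to X$, and to transport the $G$-structure of $X$ along $\intg f$. Concretely I would set the anchor $\intg p:=p\circ\intg f$ and define the action $\intg\act\colon G_1\times_{G_0}\intg X\to\intg X$ to be the unique map into the pullback whose $\intg M$-component is $\intg\pi\circ\pi_2$ and whose $X$-component is $\act\circ(\ident_{G_1}\times\intg f)$. This is well defined precisely because $\pi$ is invariant under the action of $G$, i.e.\ diagram~\eqref{diag:invarianceofaction} commutes, which is exactly what forces the two components to agree after composing with $f$ and with $\pi$. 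The three axioms \textbf{Pullback}, \textbf{Associativity}, \textbf{Unitary} for $\intg X$ then follow componentwise from those of $X$ and the universal property of the pullback, checked by post-composing with $\intg\pi$ and $\intg f$. Since $\intg\pi$ is the pullback of the open surjection $\pi$ along $f$, it is open by Lemma~\ref{lemma:BCcondition} and surjective because open surjections are stable under pullback; and the bundle square~\eqref{diag:invarianceofaction} for $\intg\pi$ commutes by construction, so $\intg X$ is a $G$-bundle over $\intg M$.

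For principality I would argue by base change along $f$. The pairing map~\eqref{principaliso} of $X$ is an isomorphism $\langle\act,\pi_2\rangle\colon G_1\times_{G_0}X\to X\times_M X$, and by the invariance~\eqref{diag:invarianceofaction} it is in fact an isomorphism over $M$ (both sides carrying the evident map to $M$ induced by $\pi$). Applying the base-change functor $\intg M\times_M(-)$ and pasting pullbacks yields canonical identifications $\intg M\times_M(G_1\times_{G_0}X)\cong G_1\times_{G_0}\intg X$ and $\intg M\times_M(X\times_M X)\cong\intg X\times_{\intg M}\intg X$, under which the pulled-back isomorphism is exactly the pairing map $\langle\intg\act,\pi_2\rangle$ of $\intg X$. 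As pullbacks of isomorphisms are isomorphisms, $\langle\intg\act,\pi_2\rangle$ is an isomorphism, so $\intg X$ is principal. Equivariance of $\intg f$ is immediate from the definition of $\intg\act$: its $X$-component gives $\intg f\circ\intg\act=\act\circ(\ident_{G_1}\times\intg f)$ and $\intg p=p\circ\intg f$, while the square commutes because $f\circ\intg\pi=\pi\circ\intg f$ by definition of the pullback.

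For the universal property, given a principal $G$-bundle $X'$ over $\intg M$ with projection $\pi'$ and an equivariant $f'\colon X'\to X$ lying over $f$ (that is, $\pi\circ f'=f\circ\pi'$, a compatibility that is anyway forced by the required identities $\intg\pi\circ\varphi=\pi'$ and $\intg f\circ\varphi=f'$), the universal property of the pullback yields a unique $\varphi:=\langle\pi',f'\rangle\colon X'\to\intg X$ with $\intg\pi\circ\varphi=\pi'$ and $\intg f\circ\varphi=f'$; that $\varphi$ is equivariant is a componentwise verification. The genuine content is that $\varphi$ is an isomorphism, and this I expect to be the main obstacle: it is the localic incarnation of the classical fact that a morphism of principal bundles over a common base is automatically an isomorphism, and it is here that both principality hypotheses must be used at once.

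I would establish it by showing that $\varphi$ is simultaneously an open surjection and a monomorphism, and then reading off that it is an isomorphism from the reflection Lemma~\ref{lem:isomreflex} applied to its kernel pair. For surjectivity, principality of $\intg X$ furnishes an isomorphism $\intg X\times_{\intg M}X'\cong G_1\times_{G_0}X'$ (sending a pair to the unique arrow carrying the image of the $X'$-component to the $\intg X$-component, via the division map $\theta$ of $\intg X$) under which the projection onto $\intg X$ is identified with $\varphi\circ\act'$; since that projection is the pullback of the open surjection $\pi'$ along $\intg\pi$, hence an open surjection, and since the action $\act'$ is itself an open surjection, descent of openness along $\act'$ shows $\varphi$ is an open surjection. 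For injectivity, any pair in the kernel pair $X'\times_{\intg X}X'$ lies over a single point of $\intg M$, so the principality isomorphism of $X'$ presents it as a pair $(g,x')$ with $g\cdot\varphi(x')=\varphi(x')$; the freeness built into the principality of $\intg X$ (the stabilizer of a point is trivial, extracted localically exactly as in Lemma~\ref{localicreasoningwithglobalpoints}) forces $g$ to be a unit, so the kernel pair reduces to the diagonal and $\varphi$ is monic. With both properties in hand, the kernel pair $pr_1,pr_2\colon X'\times_{\intg X}X'\rightrightarrows X'$ of the open surjection $\varphi$ has $\varphi$ as its coequalizer, while monicity makes the diagonal an isomorphism, so $pr_1$ is an isomorphism and $pr_2$ an open surjection; Lemma~\ref{lem:isomreflex} then gives that $\varphi$ is an isomorphism. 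The delicate points where I anticipate spending most effort are turning the pointwise freeness and transitivity arguments into honest localic statements, and keeping every construction genuinely over the common base $\intg M$, which is what lets the two principality hypotheses interact.
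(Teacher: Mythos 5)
Your construction of $\intg X$ as the pullback, the base-change argument for its principality, and the definition of $\varphi$ via the universal property of the pullback all coincide with the paper's proof; the genuine divergence is in showing that $\varphi$ is an isomorphism. The paper does this without ever mentioning monomorphisms: it pulls $\varphi$ back along the open surjection $\ident_X\times\pi'\colon X\times_M X'\to\intg X$, shows that the resulting map $\pi_{23}$ is an isomorphism through a chain of canonical identifications
\[
G_1\times_{G_0}X'\;\cong\; X'\times_{\intg M}X'\;\cong\; X'\times_X(G_1\times_{G_0}X)\;\cong\; X'\times_X(X\times_M X)\;\cong\; X\times_M X'
\]
(principality of $X'$, the pullback property of the equivariant map $f'$, principality of $X$), and then descends invertibility along the open surjection via Lemma~\ref{lem:isomreflex}. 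You instead prove that $\varphi$ is a monic open surjection: openness and surjectivity from the identification $G_1\times_{G_0}X'\cong\intg X\times_{\intg M}X'$ together with descent of openness along the split epi $\act'$, monicity from freeness of the action on $\intg X$, and then you finish with the kernel pair and Lemma~\ref{lem:isomreflex}. Both routes are sound, and both ultimately lean on the Joyal--Tierney theorem that open surjections are regular epimorphisms (the paper needs it to justify the coequalizer hypothesis of Lemma~\ref{lem:isomreflex} for $\ident_X\times\pi'$; you need it for $\varphi$ itself). Your route requires one extra fact nowhere stated in the paper --- if $g\circ f$ is open and $f$ is an open surjection then $g$ is open; this is standard and provable by taking $g_!:=(g\circ f)_!\circ f^*$ and checking the adjunction and Frobenius identities --- plus the work, which you correctly anticipate, of rendering the freeness argument localic with generalized points in the style of Lemma~\ref{localicreasoningwithglobalpoints}; it does go through: from $\varphi\circ\boldsymbol a=\varphi\circ\boldsymbol b$ one produces $\boldsymbol g$ with $\act'\circ\langle\boldsymbol g,\boldsymbol b\rangle=\boldsymbol a$, and equivariance of $\varphi$ together with monicity of the pairing map of $\intg X$ forces $\boldsymbol g=u\circ p'\circ\boldsymbol b$, whence $\boldsymbol a=\boldsymbol b$ by the unitary axiom. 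What your approach buys is conceptual transparency --- it is the localic form of the classical ``injective by freeness, surjective by transitivity'' argument and makes visible exactly where each of the two principality hypotheses enters --- at the cost of the auxiliary descent lemmas; the paper's approach stays entirely within its own toolkit of pullback isomorphisms, at the price of leaving the verification that $\psi$ and $\pi_{23}$ are mutually inverse rather implicit.
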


\begin{proof}
Let $X$ be a principal $G$-bundle over $M$ and $f: \intg M\to M$ be a map of locales. Let us define $\intg X:= X\times_M \intg M$ as the pullback of the maps of locales $\pi:X\to M$ and $f: \intg M\to M$, respectively, \ie
\[
\vcenter{\xymatrix{
\intg X\ar[r]^-{\pi_2} \ar[d]_{\pi_1}  & \intg M \ar[d]^{f}\\
X\ar@{->>}[r]^-{\pi}& M
}
}
\]
we notice that $\pi_2:\intg X\to \intg M$ is an open surjection because $\pi$ so is (open surjections are stable under pullbacks \cite{JT}), moreover we can endowed $\intg X$ with a $G$-action given by
\[ \vcenter{\xymatrix{
G_1\times_{G_0} \intg X = G_1\times_{G_0} (X\times_M \intg M)\ar[rr]^-{\act\times \ident_{\intg M}} && X\times_M \intg M=\intg X\,,   
}
}
\]
with projection $p\circ \pi_1: \intg X\to G_0$\;. It turns out that $\intg X$ is a principal $G$-bundle over $\intg M$\;. Indeed,
\begin{align*}
G_1\times_{G_0} \intg X &= G_1\times_{G_0} (X\times_M \intg M)\\
&\cong (G_1\times_{G_0} X)\times_M \intg M\\
&\cong (X\times_{M} X)\times_M \intg M\quad \text{[$X$ is a principal $G$-locale over $M$]}\\
&\cong (X\times_M \intg M)\times_{\intg M} (X\times_{M} \intg M)\\
&= \intg X \times_{\intg M} \intg X\,.
\end{align*} 
Finally, notice that
\[
\vcenter{\xymatrix{
\intg X\ar[r]^-{\pi_1:=\intg f} \ar[d]_{\pi_2:=\intg \pi}  & X \ar@{->>}[d]^-{\pi}\\
\intg M \ar[r]^-{f}& M\,,
}
}
\]
is a commutative diagram as required. Now, let $X'$ be a principal $G$-bundle over $\intg M$ with an equivariant map $f':X'\to X$. Let us define the map of locales $\varphi$ uniquely by
\[
\vcenter{\xymatrix{X'\ar@/_/[ddr]_{f'}\ar@/^/[drrr]^{\pi'}\ar@{.>}[dr]|\varphi\\
&\intg X = X\times_{M} \intg M \ar[rr]^-{\pi_2}\ar[d]^{\pi_1}&& \intg M\ar[d]^f\\
&X\ar@{->>}[rr]_-{\pi}&&M}}\hspace*{1cm}\varphi:=\langle f, \pi'\rangle\;.
\]
and let us show that $\varphi$ is an isomorphism. In order to do that, let us consider the following pullback
\[
\vcenter{\xymatrix{
\intg X\times_{\intg M} X'\ar[r]^-{\pi_2} \ar[d]_{\pi_1}  & X' \ar@{->>}[d]^-{\pi'}\\
\intg X \ar[r]^-{\intg \pi}& \intg M\,,
}
}
\]
we notice that 
\[ \intg X\times_{\intg M} X'\cong (X\times_M \intg M)\times_{\intg M} X'\cong X\times_M X'\;, \]
thus, there is an open surjection $\xymatrix{X\times_M X'\ar@{->>}[r]^-{\ident_X\times \pi'}  &\intg X=X\times_M \intg M\;,}$ which gives rise to the following pullback
\[
\vcenter{\xymatrix{
X'\times_{\intg X} (X\times_M X')\ar[r]^-{\pi_{23}} \ar[d]_{\pi_1}  & X\times_M X' \ar@{->>}[d]^-{\ident_X\times \pi'}\\
X' \ar[r]^-{\varphi}& \intg X\,.
}
}
\]
Let us prove that $\pi_{23}$ is an isomorphism. Indeed, there is a unique map of locales $\psi:X'\times_{\intg M} X'\to X'\times_{\intg X} (X\times_M X')$ given by the composition of pullbacks:
\[
\vcenter{\xymatrix{X'\times_{\intg M} X'\ar@/_/[ddr]_{\pi_1}\ar@/^/[drrr]^{\pi_2}\ar@{.>}[dr]|{\exists! \psi}\\
&X'\times_{\intg X} (X\times_M X') \ar[r]_-{\pi_{23}}\ar[d]^{\pi_1}& X\times_M X'\ar@{->>}[d]^-{\ident_X\times \pi'}\ar[r]_-{\pi_2} & X' \ar@{->>}[d]^-{\pi'}\\
&X'\ar@{->>}[r]_-{\varphi}& \intg X \ar[r]_{\pi_2} & \intg M}}
\]
however since $X'$ is a principal $G$-bundle over $\intg M$ and $f': X'\to X$ is an equivariant map (and, in fact, the diagram~\eqref{diag:equivariantmap} is a pullback). Then the following isomorphisms hold
\begin{align*}
G_1\times_{G_0} X' &\cong X'\times_{\intg M} X' \cong X'\times_X (G_1\times_{G_0} X)\\
&\cong X'\times_X (X\times_{M} X)\quad \text{[$X$ is a principal $G$-bundle over $M$]}\\
&\cong X'\times_{M} X\cong X\times_M X'\;.
\end{align*}
Therefore $X\times_M X'\cong X'\times_{\intg M} X'$ which, implies that $\pi_{23}\circ \psi=\ident$ and $\psi\circ \pi_{23}=\ident$\;. Hence $\pi_{23}$ is an isomorphism and so is $\varphi$ by Lemma~\ref{lem:isomreflex}.
 \end{proof}

\begin{lemma}\label{preliminaries, lemma: f/G}
Let $G$ be an open groupoid and $X_1, X_2$ be principal $G$-bundles over $M_1$ and $M_2$, respectively. Then an equivariant map $f:X_1\to X_2$ induces a map $f/G: M_1\to M_2$. Furthermore, $f$ is an (open) isomorphism if and only if $f/G$ is an (open) isomorphism. 
\end{lemma}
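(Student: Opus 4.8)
First I would construct the induced map $f/G$ from the universal property of the orbit coequalizer. Write $\rho_i\colon X_i\to M_i$ for the two bundle projections; since $X_i$ is principal we have $M_i\cong X_i/G$, so $\rho_i$ is the coequalizer of $\act_i$ and the projection $\pi_2\colon G_1\times_{G_0}X_i\to X_i$. Using that $f$ is equivariant, i.e.\ $f\circ\act_1=\act_2\circ(\ident_{G_1}\times f)$, together with the coequalizer identity $\rho_2\circ\act_2=\rho_2\circ\pi_2$ for $X_2$, I would compute
\begin{align*}
\rho_2\circ f\circ\act_1 &= \rho_2\circ\act_2\circ(\ident_{G_1}\times f)\\
&= \rho_2\circ\pi_2\circ(\ident_{G_1}\times f)\\
&= \rho_2\circ f\circ\pi_2\,,
\end{align*}
so that $\rho_2\circ f$ coequalizes $\act_1$ and $\pi_2$. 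By the universal property of the coequalizer $\rho_1$ there is then a unique map $f/G\colon M_1\to M_2$ with $(f/G)\circ\rho_1=\rho_2\circ f$; the same uniqueness shows that $-/G$ is functorial, $\ident/G=\ident$ and $(g\circ f)/G=(g/G)\circ(f/G)$.

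Next I would show that the square
\[
\xymatrix{
X_1\ar[r]^f\ar[d]_{\rho_1}&X_2\ar[d]^{\rho_2}\\
M_1\ar[r]_{f/G}&M_2
}
\]
is a pullback. Applying Lemma~\ref{preliminaries, lemma:pullbackprincipal} to the principal $G$-bundle $X_2$ over $M_2$ and the map $f/G\colon M_1\to M_2$ yields a principal $G$-bundle $\intg{X_2}=X_2\times_{M_2}M_1$ over $M_1$, with projection $\intg{\pi}$ and equivariant map $\intg{f}\colon\intg{X_2}\to X_2$. The defining relation $\rho_2\circ f=(f/G)\circ\rho_1$ is exactly the compatibility needed to view $f\colon X_1\to X_2$ as an equivariant map lying over $f/G$, so the universal property of Lemma~\ref{preliminaries, lemma:pullbackprincipal} supplies a unique isomorphism of principal $G$-bundles $\varphi\colon X_1\to\intg{X_2}$ with $\intg{f}\circ\varphi=f$ and $\intg{\pi}\circ\varphi=\rho_1$. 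Thus the square above is isomorphic, via $\varphi$, to the defining pullback square of $\intg{X_2}$, hence is itself a pullback in which $\intg{f}$ is the pullback of $f/G$ along $\rho_2$. I expect this identification --- in particular the verification that the compatibility hypothesis of the universal property is met --- to be the main point of the argument; once it is in place everything else is formal.

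Finally the two equivalences drop out of the pullback. If $f/G$ is an isomorphism, then its pullback $\intg{f}$ is an isomorphism, whence $f=\intg{f}\circ\varphi$ is an isomorphism; conversely, if $f$ is an isomorphism then its inverse is again equivariant, and functoriality of $-/G$ gives $(f/G)\circ(f^{-1}/G)=\ident$ and $(f^{-1}/G)\circ(f/G)=\ident$, so $f/G$ is an isomorphism (alternatively one invokes Lemma~\ref{lem:isomreflex} on the transposed pullback square, whose hypotheses hold since $\intg{f}$ is then an isomorphism, $\intg{\pi}$ is an open surjection and $\rho_2$ is a coequalizer). For the parenthetical ``open'' version, note that $\rho_1$ and $\rho_2$ are open surjections, so in the pullback square openness transfers in both directions: if $f$ is open then $f/G$ is open by Lemma~\ref{lemma:reflextionopeness}, and if $f/G$ is open then its pullback $\intg{f}$, and hence $f=\intg{f}\circ\varphi$, is open by Lemma~\ref{lemma:BCcondition}. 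Combining this with the isomorphism equivalence yields the statement for open isomorphisms.
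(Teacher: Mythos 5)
Your proposal is correct and follows essentially the same route as the paper: $f/G$ is obtained from the orbit coequalizer $M_i\cong X_i/G$, the commuting square is shown to be a pullback via Lemma~\ref{preliminaries, lemma:pullbackprincipal} and its universal property, and the (open) isomorphism equivalences are deduced from Lemmas~\ref{lemma:BCcondition}, \ref{lemma:reflextionopeness} and \ref{lem:isomreflex}. Your write-up merely fills in details the paper leaves implicit (the coequalizer computation, the compatibility needed for the universal property, and the alternative functoriality argument for the isomorphism direction), so there is nothing substantive to add.
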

\begin{proof}
Since $M_i\cong X_i/G$, the equivariant map $f$ induces a map $f/G: X_1/G\to X_2/G$. Moreover, the following commutative diagram:
\[
\vcenter{\xymatrix{
X_1 \ar[r]^-{f} \ar@{->>}[d]_-{\pi'}  & X_2 \ar@{->>}[d]^-{\pi''}\\
X_1/G \ar[r]_-{f/G}& X_2/G
}
}
\]
is a pullback in \Loc\ by Lemma~\ref{preliminaries, lemma:pullbackprincipal}. Furthermore, $f$ is an (open) isomorphism if and only if $f/G$ is an (open) isomorphism, is due to (Lemmas~\ref{lemma:BCcondition} and \ref{lemma:reflextionopeness}) Lemma~\ref{lem:isomreflex}, respectively.
\end{proof}

\begin{lemma}\label{preliminaries, lemma: twopullbacks}
Let $G$ be an open groupoid and $X_i$ be principal $G$-bundles over $M_i$ for all $i=1,2,3,4$. Consider a commuting square of equivariant maps between principal $G$-bundles $X_i$ and the corresponding mappings between their base spaces $M_i$:
\[
\vcenter{\xymatrix{
X_1\ar[r]^-{\gamma_2} \ar[d]_{\gamma_3}  & X_2 \ar[d]^{\delta_2}\\
X_3\ar[r]_-{\delta_3}& X_4\;
}
}
\quad\quad\quad\quad
\vcenter{\xymatrix{
M_1\ar[r]^-{\beta_2} \ar[d]_{\beta_3}  & M_2 \ar[d]^{\alpha_2}\\
M_3\ar[r]_-{\alpha_3}& M_4\,.
}
}
\]
The leftmost square is a pullback in \Loc\ if and only if the rightmost one is.
\end{lemma}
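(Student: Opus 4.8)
The plan is to reduce both implications at once to the equivalence in Lemma~\ref{preliminaries, lemma: f/G} that an equivariant map of principal bundles is an isomorphism if and only if the induced map of base locales is. Write $W:=X_2\times_{X_4}X_3$ for the pullback of $\delta_2$ and $\delta_3$ in $\Loc$, equipped with the diagonal $G$-action (legitimate because $\delta_2,\delta_3$ are equivariant), and $N:=M_2\times_{M_4}M_3$. Since the left square commutes, the comparison map $c_X:=\langle\gamma_2,\gamma_3\rangle\colon X_1\to W$ is defined, and the left square is a pullback precisely when $c_X$ is an isomorphism; likewise $c_M:=\langle\beta_2,\beta_3\rangle\colon M_1\to N$ is defined and the right square is a pullback precisely when $c_M$ is an isomorphism. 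The whole proof then hinges on recognizing $W$ as a principal $G$-bundle over $N$ and identifying the map it induces on orbit locales with $c_M$.

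First I would establish the key claim that $W$ is a principal $G$-bundle over $N$. Each $\delta_i$ (for $i=2,3$) is equivariant with $\delta_i/G=\alpha_i$, so Lemma~\ref{preliminaries, lemma: f/G} provides pullback squares exhibiting $X_2\cong X_4\times_{M_4}M_2$ and $X_3\cong X_4\times_{M_4}M_3$ as $G$-locales over $M_2$ and $M_3$. A routine pasting computation of iterated pullbacks (of the same kind already carried out in the proof of Lemma~\ref{preliminaries, lemma:pullbackprincipal}) then gives
\[
W=X_2\times_{X_4}X_3\cong (X_4\times_{M_4}M_2)\times_{X_4}(X_4\times_{M_4}M_3)\cong X_4\times_{M_4}(M_2\times_{M_4}M_3)=X_4\times_{M_4}N,
\]
an isomorphism of $G$-locales over $N$ (the diagonal action on $W$ corresponds to $\act\times\ident_N$, and the bundle projections $X_2\to M_2$, $X_3\to M_3$ are $G$-invariant). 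Now $N$ carries a canonical map to $M_4$, and applying Lemma~\ref{preliminaries, lemma:pullbackprincipal} to the principal bundle $X_4$ over $M_4$ along this map shows that $X_4\times_{M_4}N$, hence $W$, is a principal $G$-bundle over $N$, with orbit map the projection $W\to N$; in particular $W/G\cong N$.

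Next I would check that $c_X\colon X_1\to W$ is equivariant with $c_X/G=c_M$. Equivariance is immediate from the diagonal action and the equivariance of $\gamma_2,\gamma_3$. For the identification I would compose with the two projections of $N$: using that the orbit of the projection $W\to X_i$ is the projection $\mathrm{pr}_i\colon N\to M_i$, together with functoriality of $(-)/G$ and $\gamma_i/G=\beta_i$, one gets $\mathrm{pr}_i\circ(c_X/G)=(\mathrm{pr}_i)\circ(c_X/G)=(\mathrm{pr}_{X_i}\circ c_X)/G=\gamma_i/G=\beta_i=\mathrm{pr}_i\circ c_M$ for $i=2,3$, so $c_X/G=c_M$ since a map into the pullback $N$ is determined by its two projections.

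With these in place the conclusion is immediate: by Lemma~\ref{preliminaries, lemma: f/G} applied to the equivariant map $c_X$ between the principal bundles $X_1$ (over $M_1$) and $W$ (over $N$), the map $c_X$ is an isomorphism if and only if $c_X/G=c_M$ is an isomorphism. Translating back, the left square is a pullback if and only if the right one is, which is exactly the assertion; both directions fall out of this single equivalence, so no separate descent argument is needed. The main obstacle is the key claim of the second paragraph — producing the principal $G$-bundle structure on $W$ over $N$ via the equivariant identification $W\cong X_4\times_{M_4}N$ over $N$ — since this is what licenses the use of Lemma~\ref{preliminaries, lemma: f/G}; the remaining steps are formal.
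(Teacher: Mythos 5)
Your proof is correct, and its architecture is genuinely different from the paper's. The paper treats the two implications separately: for the forward direction it verifies the universal property of the base square by hand, testing against an arbitrary locale $Z$ with maps $z_2,z_3$ agreeing over $M_4$, pulling the bundles back along these to obtain a principal bundle over $Z$ (Lemma~\ref{preliminaries, lemma:pullbackprincipal}), lifting into $X_1$ via the assumed pullback property, descending with Lemma~\ref{preliminaries, lemma: f/G}, and finishing with a separate uniqueness check; for the backward direction it performs the pasting computation $X_3\times_{X_4}X_2\cong X_4\times_{M_4}(M_3\times_{M_4}M_2)\cong X_4\times_{M_4}M_1\cong X_1$. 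You instead run one symmetric argument: form $W=X_2\times_{X_4}X_3$ and $N=M_2\times_{M_4}M_3$, use that same pasting computation to exhibit $W\cong X_4\times_{M_4}N$ as a principal $G$-bundle over $N$, identify $c_X/G=c_M$ for the two canonical comparison maps, and read both implications off the isomorphism-reflection statement of Lemma~\ref{preliminaries, lemma: f/G}. Your route buys economy and a bit of extra rigor: since being a pullback means precisely that the canonical comparison map is invertible, your argument establishes invertibility of that specific map, whereas the paper's backward direction only exhibits some isomorphism $X_3\times_{X_4}X_2\cong X_1$ without checking that it is the comparison map; on the other hand, the paper's forward direction is more hands-on and never needs to equip $W$ with a bundle structure over $N$. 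The one step you should spell out in a full write-up is the identification of the induced map on orbit locales of each projection $W\to X_i$ with $\mathrm{pr}_i\colon N\to M_i$ (equivalently, that $W\cong X_4\times_{M_4}N$ is an isomorphism of $G$-locales over $N$), since that single fact is what licenses both the principality of $W$ over $N$ and the equation $c_X/G=c_M$ on which your whole reduction rests.
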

\begin{proof}
Let us suppose that the leftmost square is a pullback, \ie\ $X_1\cong X_2\times_{X_4} X_3$\;, and let $Z$ be a locale and $z_i:Z\to M_i$ be maps of locales with $\alpha_2\circ z_2=\alpha_3\circ z_3:=z_4$ for all $i:2,3,4$; diagrammatically, we have
\begin{equation}\label{diag:pullbackorbitspaces}
\xymatrix{
Z \ar@/_/[ddr]_{z_3}\ar@/^/[drr]^{z_2}\\
&M_1\ar[r]^-{\beta_2} \ar[d]_{\beta_3}  & M_2 \ar[d]^{\alpha_2}\\
&M_3\ar[r]_-{\alpha_3}& M_4 \;.
}
\end{equation}
The equivariant maps $\delta_2$ and $\delta_3$ yield the following pullbacks, by Lemma~\ref{preliminaries, lemma:pullbackprincipal},
\[
\xymatrix{
X_i\cong X_4\times_{M_4} M_i\ar[r]^-{(\pi)_i} \ar[d]_{\delta_i}  & M_i \ar[d]^{\alpha_i}\\
X_4\ar[r]_-{(\pi)_4}& M_4
}
\]
for all $i=2.3$\;. Pulling back along $z_i$ we have
\[
\xymatrix{
X_i\times_{M_i} Z \ar[d]_{\pi_1}\ar[r]^-{\pi_2} & Z \ar[d]^-{z_i}\\
X_i\cong X_4\times_{M_4} M_i \ar[d]_{\pi_1}\ar[r]^-{\pi_2} & M_i\ar[d]^{\alpha_i}\\ 
X_4  \ar@{->>}[r]_-{(\pi)_4} & M_4\;,
}
\]
that is, $X_i\times_{M_i} Z \cong X_4\times_{M_4} Z$ for all $i=2,3,4$\;.  By Lemma~\ref{preliminaries, lemma:pullbackprincipal} there is a unique principal $G$-bundle over $Z$ (up to isormorphisms) $W$, such that, the maps of locales $z_i$ lift uniquely to maps of locales $x_i:W\to X_i$\;, that is, the following diagram
\[
\xymatrix{
W\ar[r]^-{x_i} \ar[d]_{(\pi)_W}  & X_i \ar[d]^{(\pi)_i}\\
Z\ar[r]_-{z_i}& M_i\;,
}
\]
is commutative and $\delta_2\circ x_2=\delta_3\circ x_3=x_4$ by \eqref{diag:pullbackorbitspaces}. Thus
\begin{equation*}
\xymatrix{
W \ar@/_/[ddr]_{x_3}\ar@/^/[drr]^{x_2}\ar@{.>}[dr]|{\exists! \psi}\\
&X_1\ar[r]^-{\gamma_2} \ar[d]_{\gamma_3}  & X_2 \ar[d]^{\delta_2}\\
&X_3\ar[r]_-{\delta_3}& X_4
}
\end{equation*}
because $X_1\cong X_3\times_{X_4} X_2$ by hypothesis. By Lemma~\ref{preliminaries, lemma: f/G}, $\psi:W\to Z$ induces a map of locales $z_1:Z\to M_1$ such that
\[
\xymatrix{
W\ar[r]^-{\psi}  \ar@{->>}[d]_{(\pi)_W}  & X_1  \ar@{->>}[d]^{(\pi)_1}\\
Z\ar[r]_-{z_1}& M_1
}
\]
is a pullback in \Loc, and $\beta_i\circ z_1=z_i$ holds for all $i=2,3$. It remains to show that $z_1$ is unique, in order to conclude that, notice that any map of locales $z'_1:Z\to M_1$ such that $\beta_i\circ z'_1=z_i$ for all $i=2,3$; extends to a map of locales $x'_1:\langle x_i,z'_1\rangle:W\to X_i\times_{M_i} M_i\cong X_1$ with $\gamma_i\circ x'_1=x_i$ for all $i=2,3$\;. This implies that $x'_1=\psi$ and finally that $z'_1=z_1$ as required. Conversely, let us suppose that the rightmost square is a pullback, \ie\ $M_1\cong M_3\times_{M_4} M_2$, we have to show that $X_1\cong X_3\times_{X_4} X_2$. Indeed, by Lemma~\ref{preliminaries, lemma:pullbackprincipal} we have $X_i\cong X_4\times_{M_4} M_i$ for all $i=1,2,3$. Thus,
\begin{align*}
X_3\times_{X_4} X_2 &\cong (X_4\times_{M_4} M_3)\times_{X_4} (X_4\times_{M_4} M_2)\\
&\cong X_4\times_{M_4}( M_3 \times_{M_4} M_2)\\
&\cong X_4\times_{M_4} M_1\\
&\cong X_1\,.
\end{align*}

\end{proof}

\paragraph{Principal bibundles.}

There are several notions of map from a groupoid $H$ to a groupoid $G$ which are based on bibundles between $G$ and $H$ and generalize continuous functors $\varphi:H\to G$. For instance, Hilsum and Skandalis~\cite{HS87} define a map $\varphi:W\to V$ between the spaces of leaves of two smooth foliated manifolds to be a \emph{principal $G$-$H$-bibundle} --- that is, a principal $G$-bundle over $H_0$ --- where $G$ and $H$ are the holonomy groupoids of $V$ and $W$, respectively. In the present paper, following~\cite{Mr99} and \cite{Funct2} , by a \emph{Hilsum--Skandalis map} from an open groupoid $H$ to an open groupoid $G$ will be meant the isomorphism class of a principal $G$-$H$-bibundle. Another name for such maps is \emph{bibundle functors} \cite{MeyerZhu}. The \emph{category $\HSG$ of Hilsum--Skandalis maps} is that whose objects are open groupoids and whose morphisms are the Hilsum--Skandalis maps, with the composition in $\HSG$ being induced by the tensor product of principal bibundles.

\begin{lemma}\label{compositionhsmaps}
Let $G,H$ and $K$ be open groupoids. Suppose that $X$ is a principal $G$-$H$-bibundle and $X'$ is a principal $H$-$K$-bibundle. Then $X\otimes_H X'$ is a principal $G$-$K$-bibundle.  
\end{lemma}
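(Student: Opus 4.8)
The plan is to verify that $X \otimes_H X'$, equipped with the obvious left $G$-action (inherited from $X$) and right $K$-action (inherited from $X'$), is a principal $G$-$K$-bibundle. Recall that $X \otimes_H X'$ is defined as the coequalizer \eqref{eq: tensorproductgloc}, and that principality requires two things: that the pairing map $\langle \act, \pi_2\rangle : G_1 \times_{G_0} (X\otimes_H X') \to (X\otimes_H X') \times_{(X\otimes_H X')/G} (X\otimes_H X')$ be an isomorphism, and that the projection to the orbit locale be an open surjection. Since a principal $G$-$K$-bibundle is, by definition, a principal $G$-bundle over $K_0$, what I actually need to check is that the bibundle $X \otimes_H X'$ is principal as a \emph{left} $G$-bundle, with base locale $K_0$ playing the role of $M$.

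\textbf{The key steps.} First I would establish that the left $G$-action and the right $K$-action on $X\otimes_H X'$ are well defined on the coequalizer and commute with each other, so that $X\otimes_H X'$ is genuinely a $G$-$K$-bilocale; this is a routine diagram chase using that the actions descend through the coequalizer \eqref{eq: tensorproductgloc}. Second, I would identify the orbit locale $(X\otimes_H X')/G$ with $K_0$. The crucial input here is that $X$, being a principal $G$-$H$-bibundle, has $X/G \cong H_0$, and $X'$, being a principal $H$-$K$-bibundle, has $X'/H \cong K_0$; combining these with the fact that the tensor product is formed over $H$ should yield $(X\otimes_H X')/G \cong X'/H \cong K_0$. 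Third, and most importantly, I would verify that the $G$-pairing map is an isomorphism. The natural strategy is to exhibit its inverse explicitly using the point-set reasoning of Lemma~\ref{localicreasoningwithglobalpoints}: given two points of $X\otimes_H X'$ in the same $K_0$-fibre, represented by pairs $(x, x')$ and $(y, y')$, there should be a unique $G$-arrow $g$ with $g\cdot(x\otimes x') = y\otimes y'$, built from the $\theta$-map of the principal $G$-bundle $X$.

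\textbf{The main obstacle.} The hard part will be handling the $G$-pairing map isomorphism, because the source and target both involve the coequalizer $X\otimes_H X'$, and one must show the pairing map is invertible \emph{after} descending through the $H$-coequalizer. The cleanest route is likely to reduce everything to pullback manipulations: using that $X$ is a principal $G$-bundle over $H_0$ to write $G_1 \times_{G_0} X \cong X \times_{H_0} X$, then tensoring this isomorphism with $X'$ over $H$ and using that the tensor product interacts well with pullbacks (as developed in Lemma~\ref{preliminaries, lemma: twopullbacks} and Lemma~\ref{preliminaries, lemma:pullbackprincipal}), to obtain
\[
G_1 \times_{G_0} (X\otimes_H X') \;\cong\; (X\times_{H_0} X)\otimes_H X' \;\cong\; (X\otimes_H X')\times_{K_0}(X\otimes_H X')\,.
\]
Making the middle isomorphism rigorous — commuting a fibre product over $H_0$ past the coequalizer that defines $\otimes_H$ — is where care is needed, and this is precisely where the openness and surjectivity hypotheses enter, via Lemma~\ref{lemma:reflextionopeness} and the Beck--Chevalley condition of Lemma~\ref{lemma:BCcondition}. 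Finally, openness and surjectivity of the projection $X\otimes_H X' \to K_0$ should follow from the corresponding properties of the bibundle projections together with stability of open surjections under pullback and the reflection result Lemma~\ref{lemma:reflextionopeness}.
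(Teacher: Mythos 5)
Your overall architecture is the same as the paper's: work upstairs on $X\times_{H_0}X'$, where Lemma~\ref{preliminaries, lemma:pullbackprincipal} and principality of $X$ give $G_1\times_{G_0}(X\times_{H_0}X')\cong(X\times_{H_0}X')\times_{X'}(X\times_{H_0}X')$, then push this isomorphism down through the $H$-quotient to obtain the $G$-pairing isomorphism over $K_0$, with open surjectivity of $X\otimes_H X'\to K_0$ also obtained by descent; your displayed chain of isomorphisms is correct in content. The problem is that your proposal defers exactly the step that constitutes the proof. ``Commuting the fibre product past the coequalizer'' is not something Lemma~\ref{lemma:BCcondition} and Lemma~\ref{lemma:reflextionopeness} can deliver when applied directly to the coequalizer \eqref{eq: tensorproductgloc}: in \Loc\ pullbacks do not commute with coequalizers in general, and openness of the quotient map alone neither identifies $G_1\times_{G_0}(X\otimes_H X')$ as a quotient of $G_1\times_{G_0}(X\times_{H_0}X')$ nor identifies $(X\otimes_H X')\times_{K_0}(X\otimes_H X')$ as the orbit locale of anything upstairs. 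Your first-choice strategy fares no better: points of $X\otimes_H X'$ or of its fibre products are not represented by pairs $(x,x')$ upstairs (maps into a localic quotient need not lift), so the point-set reasoning of Lemma~\ref{localicreasoningwithglobalpoints} gives no way to construct the inverse of the pairing map on the quotient.

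The missing idea, which is the hinge of the paper's argument, is that the quotient map $X\times_{H_0}X'\twoheadrightarrow X\otimes_H X'$ is itself the projection of a \emph{principal $H$-bundle}; this is where principality of $X'$ enters, through the isomorphism $X\times_{H_0}H_1\times_{H_0}X'\cong X\times_{H_0}(X'\times_{K_0}X')$. Once this is in place, every descent step in your outline becomes an application of the principal-bundle lemmas rather than of raw pullback manipulations: Lemma~\ref{preliminaries, lemma:pullbackprincipal} exhibits $G_1\times_{G_0}(X\times_{H_0}X')$ as the pullback of this $H$-bundle along $G_1\times_{G_0}(X\otimes_H X')\to X\otimes_H X'$, so its orbit locale is $G_1\times_{G_0}(X\otimes_H X')$; Lemma~\ref{preliminaries, lemma: twopullbacks} identifies the orbit locale of $(X\times_{H_0}X')\times_{X'}(X\times_{H_0}X')$ with $(X\otimes_H X')\times_{K_0}(X\otimes_H X')$; and Lemma~\ref{preliminaries, lemma: f/G} then descends the $H$-equivariant isomorphism upstairs to the desired isomorphism $G_1\times_{G_0}(X\otimes_H X')\cong(X\otimes_H X')\times_{K_0}(X\otimes_H X')$. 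None of these three lemmas can be invoked without first knowing the $H$-principality of $X\times_{H_0}X'$ over the coequalizer, and your sketch neither states this fact nor offers a substitute for it; that is a genuine gap, not a detail.
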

\begin{proof}
Let us prove that $X\otimes_H X'$ is a principal $G$-$K$-bibundle. In order to do that, notice that the mapping $\pi_{X'}:X\times_{H_0} X'\to X'$ is an open surjection because $\pi_X:X\to H_0$ is one. So, the induced map $\pi_{X'/H}: X\otimes_{H} X'\to X'/H\cong K_0$ is an open surjection by Lemma~\ref{preliminaries, lemma: f/G}. By Lemma~\ref{preliminaries, lemma:pullbackprincipal}, $X\times_{H_0} X'$ is a principal $G$-bundle over $X'$, because $X$ is a principal $G$-bundle over $H_0$, and
\[
\xymatrix{
X\times_{H_0} X'\ar[r]^-{\pi_2} \ar[d]_{\pi_1}  & X' \ar[d]^{p_{X'}}\\
X\ar@{->>}[r]_{\pi_{X}}& H_0\;,
}
\]  
is a pullback in \Loc\,. Therefore,
\[ 
G_1\times_{G_0} (X\times_{H_0} X')\cong (X\times_{H_0} X')\times_{X'}(X\times_{H_0} X')\,.    \]
Let us prove that $X\times_{H_0} X'$ is a principal $H$-bundle over $K_0$. In fact, since $X'$ is a principal $H$-bundle over $K_0$ we have
\begin{align*}
X\times_{H_0} H_1\times_{H_0} X' &\cong  X\times_{H_0} (H_1\times_{H_0} X') \\
&\cong X\times_{H_0} (X'\times_{K_0} X')\\
&\cong (X\times_{H_0} X')\times_{K_0} (X\times_{H_0} X')\,.
\end{align*}
Now, pulling back the quotient $\xymatrix{X\times_{H_0} X'\ar@{->>}[r] &(X\otimes_{H} X')}$ along the range map $r:G_1\to G_0$ yields a principal $H$-bundle over $G_1\times_{G_0}(X\otimes_{H} X')$, namely $G_1\times_{G_0}(X\times_{H_0} X')$ due to Lemma~\ref{preliminaries, lemma:pullbackprincipal}. Now,
let us consider the following squares:
\[
\xymatrix{
(X\times_{H_0} X')\times_{X'} (X\times_{H_0} X')\ar[r]^-{\pi_2} \ar[d]_{\pi_1}  & X\times_{H_0} X' \ar[d]^{\pi_{X'}}\\
X\times_{H_0} X'\ar[r]_-{\pi_{X'}}& X'\;
}
\]
\[
\xymatrix{
(X\otimes_{H} X')\times_{K_0} (X\otimes_{H} X')\ar[r]^-{\pi_2} \ar[d]_{\pi_1}  & X\otimes_{H} X' \ar[d]^{\pi_{X'/H}}\\
X\otimes_{H} X'\ar[r]_-{\pi_{X'/H}}& K_0\,.
}
\]
Notice that the upper square is a pullback of principal $H$-bundles over $K_0$ and $X\otimes_{H} X'$, respectively. Therefore by Lemma~\ref{preliminaries, lemma: twopullbacks}, the lower one is also a pullback of their orbit locales, which implies that $(X\times_{H_0} X')\times_{X'} (X\times_{H_0} X')$ is a principal $H$-bundle over $(X\otimes_{H} X')\times_{K_0} (X\otimes_{H} X')$. Finally, the following diagram:
\[
\vcenter{\xymatrix{
G_1\times_{G_0}(X\times_{H_0} X') \ar@{->>}[r] \ar[d]_-{\cong}  & G_1\times_{G_0}(X\otimes_{H} X') \ar[d]\\
(X\times_{H_0} X')\times_{X'} (X\times_{H_0} X') \ar@{->>}[r]& (X\otimes_{H} X')\times_{K_0} (X\otimes_{H} X')\,
}
}
\]
is a pullback in \Loc\, and
\[ 
G_1\times_{G_0} (X\otimes_{H} X')\cong (X\otimes_{H} X')\times_{K_0}(X\otimes_{H} X')
\]
due to Lemma~\ref{preliminaries, lemma: f/G}. This shows that $X\otimes_{H} X'$ is a principal $G$-$K$-bibundle.
\end{proof}

\section{Supported $Q$-modules}\label{section:supportedmodules}

The purpose of this section is to generalize for open groupoids the algebraic theory of supported modules of \cite{GSQS}.

\paragraph{$Q$-modules.} Let $Q$ be a groupoid quantale. By a \emph{(left) $Q$-module} is meant a locale $X$ equipped with a left $Q$-action  $(q,x)\mapsto q\cdot x$ together with a unital left $A$-module structure $(a,x)\mapsto {\lres{a}{x}}$, that besides the usual axioms of modules, it must satisfy the following additional conditions for all $q \in Q,a\in A$ and $x,y \in X$:
\begin{align}
 (\lres{a}{q})\cdot x &=  {\lres{a}{(q\cdot x)}}\label{modgq1}\\
 (\rres{q}{a})\cdot x &= q\cdot (\lres{a}{x})\label{modgq2}\\
 \lres{a}{(x\wedge y)} &= (\lres{a}{x})\wedge y\;.\label{modgq3} 
\end{align}
the condition \eqref{modgq3} implies, for all $a\in A$ and $x\in X$, that
\begin{equation}
\lres{a}{x}={\lres{a}{(1_X\wedge x)}}=(\lres{a}{1_X})\wedge x\;.
\end{equation}

\paragraph{Hilbert $Q$-modules.}
Let $Q$ be a groupoid quantale. By a \emph{pre-Hilbert $Q$-module} is meant a $Q$-module $X$ equipped with a binary operation:
\[   \langle -,- \rangle: X\times X \to Q    \]
called the \emph{inner product} satisfying for all $x,y\in X$, $a\in A$ and $q\in Q$ the following axioms:
\begin{align}
 \langle q\cdot x,y \rangle &=q\langle x,y \rangle\label{hilbertmod1}\\
 \lres{a}{\langle x,1_X \rangle} &=\langle \lres{a}{x},1_X \rangle\label{hilbertmod2} \\
  \langle \V_{\alpha}x_{\alpha},y \rangle &=\V_{\alpha}\langle x_{\alpha},y \rangle\label{hilbertmod3}\\
 \langle x,y \rangle &=\langle y,x \rangle^{\ast}\;.\label{hilbertmod4} 
\end{align}

\paragraph{Supported $Q$-modules.}
Let $Q$ be a groupoid quantale. By a \emph{supported $Q$-module} is meant a pre-Hilbert $Q$-module $X$ equipped with a monotone map:
\[   \spp_X: X\to A\;,  \]
satisfying for all $x,y\in X$ and $q\in Q$ the following conditions:
\begin{align}
\spp_X(1_X) &=1_A\label{sppmod1}\\  
\lres {\spp_X(x)}{1_X} &\leq \langle x,x\rangle\cdot 1_X\label{sppmod2}\\
\lres {\spp_X(x)}{x} &= x\;.\label{sppmod4}
\end{align}
Notice that combining \eqref{modgq3}, \eqref{sppmod2} and \eqref{sppmod4}, for all $x\in X$, we have
\begin{equation*}
x=\lres {\spp_X(x)}{x}=(\lres {\spp_X(x)}{1_X})\wedge x\leq \langle x,x\rangle\cdot 1_X\wedge x\leq x\;,
\end{equation*}
obtaining the following useful expression:
\begin{equation}\label{sppmod5}
x=\langle x,x\rangle\cdot 1_X\wedge x\;.
\end{equation}

\paragraph{Stably supported $Q$-modules.}
Let $Q$ be a groupoid quantale and $X$ be a supported $Q$-module. A support of $X$ is said to be \emph{stable} if the following condition holds:
\begin{equation}\label{stablespp}
\spp_X(q\cdot x)\leq \spp_Q(q)\quad \forall q\in Q,\forall x\in X\;.
\end{equation}

\begin{theorem}\label{theorem:stableinner1}
Let $Q$ be a groupoid quantale and $X$ be a stably supported $Q$-module. Then, for all $x\in X$, we have
\begin{equation}\label{sppformula}
\varsigma_X(x)=\varsigma_Q(\langle x,x \rangle)=\varsigma_Q(\langle x,1_X\rangle)\;.
\end{equation}
Moreover, $\spp_X$ is \emph{$A$-equivariant}, that is, for all $x\in X$ and $a\in A$ we have
\begin{equation}\label{sppequivariance}
\spp_X(\lres{a}{x})=a\wedge \spp_X(x)\;.
\end{equation}

\end{theorem}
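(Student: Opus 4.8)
The plan is to establish the two displayed equalities in \eqref{sppformula} first, since the equivariance \eqref{sppequivariance} should follow from them together with the equivariance of $\spp_Q$. I would start from the identity \eqref{sppmod5}, namely $x=\langle x,x\rangle\cdot 1_X\wedge x$, and apply $\spp_X$. The key algebraic inputs are: the defining support axioms \eqref{sppmod1}--\eqref{sppmod4} on $X$; the quantale support axioms \eqref{sppq} and the stability condition \eqref{stablespp}; and the fact (stated in the preliminaries) that an equivariant support on $Q$ is automatically stable, so $\spp_Q$ satisfies \eqref{supportstable}. Note also that $\langle x,x\rangle\cdot 1_X$ is of the form $q\cdot 1_X$ with $q=\langle x,x\rangle$, which lets me bring stability \eqref{stablespp} to bear: $\spp_X(\langle x,x\rangle\cdot 1_X)\le\spp_Q(\langle x,x\rangle)$.

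For the first equality $\spp_X(x)=\spp_Q(\langle x,x\rangle)$ I would argue by two inequalities. For $\le$: from \eqref{sppmod2} we have $\lres{\spp_X(x)}{1_X}\le\langle x,x\rangle\cdot 1_X$, so applying $\spp_X$ and using monotonicity together with the reflexivity-type identity $\spp_X(\lres{a}{1_X})=a$ (which comes from $\spp_X(1_X)=1_A$ and the expected equivariance, or more safely directly from \eqref{sppmod4}/\eqref{sppmod1}) gives $\spp_X(x)\le\spp_X(\langle x,x\rangle\cdot 1_X)\le\spp_Q(\langle x,x\rangle)$ by stability. For the reverse inequality I would use \eqref{hilbertmod1} and \eqref{hilbertmod4} to rewrite $\langle x,x\rangle=\langle x,x\rangle$ via the $Q$-support axiom $\lres{\spp_Q(q)}{y}\le qq^*y$ applied with $q=\langle x,x\rangle^{*}=\langle x,x\rangle$-type manipulations, ultimately showing $\spp_Q(\langle x,x\rangle)\le\spp_X(x)$; here the inner-product axiom $\langle q\cdot x,y\rangle=q\langle x,y\rangle$ and self-adjointness \eqref{hilbertmod4} are the crucial tools to relate $\langle x,x\rangle$ back to the action of $\spp_X(x)$ on $x$ through \eqref{sppmod4}.

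For the second equality $\spp_Q(\langle x,x\rangle)=\spp_Q(\langle x,1_X\rangle)$ I would use stability of $\spp_Q$ in the form $\spp_Q(qy)=\spp_Q(\rres{q}{\spp_Q(y)})$ from \eqref{supportstable}, writing $\langle x,x\rangle=\langle x,1_X\rangle$-related product via \eqref{sppmod5} and the inner-product axioms; the point is that $x=\langle x,x\rangle\cdot 1_X\wedge x$ lets one replace the second slot $x$ by $1_X$ up to a support-controlled correction, so that the supports of $\langle x,x\rangle$ and $\langle x,1_X\rangle$ coincide. Concretely I expect to compute $\langle x,1_X\rangle=\langle\langle x,x\rangle\cdot 1_X\wedge x,\,1_X\rangle$ and push the inner product through, then take $\spp_Q$ and invoke the stability identities to collapse the difference.

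Finally, for the equivariance \eqref{sppequivariance}, I would apply the already-proven \eqref{sppformula} to $\lres{a}{x}$ and use \eqref{hilbertmod2} to get $\langle\lres{a}{x},1_X\rangle=\lres{a}{\langle x,1_X\rangle}$, whence $\spp_X(\lres{a}{x})=\spp_Q(\lres{a}{\langle x,1_X\rangle})=a\wedge\spp_Q(\langle x,1_X\rangle)=a\wedge\spp_X(x)$, the middle step being exactly the equivariance of $\spp_Q$. The main obstacle I anticipate is the reverse inequality $\spp_Q(\langle x,x\rangle)\le\spp_X(x)$ in the first equality: unlike the forward direction, which is a direct consequence of stability \eqref{stablespp}, this direction requires carefully combining the $Q$-side support axiom $\lres{\spp_Q(q)}{y}\le qq^*y$ with the Hilbert-module axioms \eqref{hilbertmod1} and \eqref{hilbertmod4} and the support axiom \eqref{sppmod4}, and getting the quantifiers and the passage between $Q$ and $X$ right is where the real work lies.
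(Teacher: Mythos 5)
Your overall decomposition has correct pieces: the inequality $\spp_X(x)\le\spp_Q(\langle x,x\rangle)$ does follow from \eqref{sppmod2} (or \eqref{sppmod5}), monotonicity of $\spp_X$ and stability \eqref{stablespp}, and your derivation of \eqref{sppequivariance} from \eqref{sppformula} via \eqref{hilbertmod2} and the equivariance of $\spp_Q$ is exactly the paper's argument. One repairable slip in the forward direction: the identity $\spp_X(\lres{a}{1_X})=a$ for general $a$ does \emph{not} follow from \eqref{sppmod1}/\eqref{sppmod4} alone --- it is essentially the equivariance you are trying to prove, so invoking it is circular --- but you only need $\spp_X(x)\le\spp_X(\lres{\spp_X(x)}{1_X})$, which follows from $x=\lres{\spp_X(x)}{x}\le\lres{\spp_X(x)}{1_X}$ and monotonicity.

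The genuine gap is the reverse inequality, which you yourself flag as ``the main obstacle'': you never prove $\spp_Q(\langle x,x\rangle)\le\spp_X(x)$, and the toolkit you propose for it --- \eqref{hilbertmod1}, \eqref{hilbertmod4}, \eqref{sppmod4} and the quantale axiom $\lres{\spp_Q(q)}{y}\le qq^*y$ --- cannot do the job, because none of these gives you a way to move the $A$-scalar $\spp_X(x)$ out of a slot of the inner product for a general $x\in X$; the one axiom that does is \eqref{hilbertmod2}, which is missing from your list for this step. The same defect undermines your plan for the second equality: the inner product does not distribute over meets, so you cannot ``push it through'' $\langle x,x\rangle\cdot 1_X\wedge x$. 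The paper handles everything in a single cycle of inequalities whose crucial step is precisely the maneuver you reserve for the equivariance part, specialized to $a=\spp_X(x)$: by \eqref{sppmod4}, \eqref{hilbertmod2} and equivariance of $\spp_Q$,
\[
\spp_Q(\langle x,1_X\rangle)=\spp_Q(\langle \lres{\spp_X(x)}{x},\,1_X\rangle)=\spp_Q(\lres{\spp_X(x)}{\langle x,1_X\rangle})=\spp_X(x)\wedge\spp_Q(\langle x,1_X\rangle)\le\spp_X(x)\;,
\]
which, combined with $\spp_Q(\langle x,x\rangle)\le\spp_Q(\langle x,1_X\rangle)$ (monotonicity in the second slot, via \eqref{hilbertmod3} and \eqref{hilbertmod4}) and your forward inequality, closes the cycle $\spp_Q(\langle x,x\rangle)\le\spp_Q(\langle x,1_X\rangle)\le\spp_X(x)\le\spp_Q(\langle x,x\rangle)$ and proves both equalities of \eqref{sppformula} at once. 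The missing idea is thus already present in your own write-up: you use it to deduce \eqref{sppequivariance} from \eqref{sppformula}, but fail to notice that applying \eqref{hilbertmod2} with $a=\spp_X(x)$ is what resolves the obstacle inside \eqref{sppformula} itself.
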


\begin{proof}
The following sequence of (in)equalities will show \eqref{sppformula}:
\begin{align*}
\varsigma_Q(\langle x,x \rangle)&\leq \varsigma_Q(\langle x,1_X \rangle)\\
&= \varsigma_Q(\langle \lres{\varsigma_X(x)}{x},1_X  \rangle)& \text{[by \eqref{sppmod4}]}\\
&= \varsigma_Q( \lres{\varsigma_X(x)}{\langle x,1_X\rangle})& \text{[by \eqref{hilbertmod2}]}\\
&= \varsigma_X(x)\wedge \varsigma_Q(\langle x,y\rangle)& \text{($\spp_Q$ is equivariant)}\\
&\leq \varsigma_X(x)\\
&= \varsigma_X(\langle x,x \rangle\cdot 1_X\wedge x)& \text{[by \eqref{sppmod5}]} \\
& \leq \varsigma_X(\langle x,x \rangle\cdot 1_X)\\
& \leq \varsigma_Q(\langle x,x \rangle)& \text{($\spp_X$ is stable)}\\
& \leq \varsigma_Q(\langle x,1_X \rangle)\;.
\end{align*}
It remains to prove the equivariance of $\spp_X$. Indeed, for all $a\in A$ and $x\in X$, we have
 \begin{align*}
  \spp_X(\lres{a}{x}) & =\spp_Q(\langle \lres{a}{x},1_X \rangle) \\
  & = \spp_Q(\lres{a}{\langle x, 1_X \rangle})& \text{[by \eqref{hilbertmod2}]} \\
  & = a \wedge \spp_Q(\langle x,1_X \rangle)&  \text{($\varsigma_Q$ is equivariant)} \\
  & = a \wedge \spp_X(x)\;.
 \end{align*}

\end{proof}

\begin{corollary}\label{cor:restQ}
Let $Q$ be a groupoid quantale and $X$ be a stably supported $Q$-module. Then, for all $x\in X$ and $q\in Q$:
\begin{equation}
\lres{\spp_X(x)}{q}=\langle x,x\rangle1_Q\wedge q\;.
\end{equation}
\end{corollary}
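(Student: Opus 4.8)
The statement is an identity in $Q$, so the plan is to transport everything into the quantale and reduce to a single right-sided element. Write $s:=\spp_X(x)\in A$ and $c:=\langle x,x\rangle\in Q$. By Theorem~\ref{theorem:stableinner1} we have $s=\spp_Q(c)$, which is the crucial bridge allowing the inner-product element $c$ to enter a computation about $s$; and by \eqref{hilbertmod4} the element $c$ is self-adjoint, $c^*=c$. First I would reduce the claim to the case $q=1_Q$: applying the quantal-frame compatibility law \eqref{qf3}, namely $(\lres{a}{q'})\wedge m=\lres{a}{(q'\wedge m)}$, with $a=s$, $q'=1_Q$ and $m=q$, together with the fact that $1_Q$ is the top element, gives $\lres{s}{q}=\lres{s}{(1_Q\wedge q)}=(\lres{s}{1_Q})\wedge q$. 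Hence it suffices to establish the single identity $\lres{s}{1_Q}=c\,1_Q$ between right-sided elements.

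Next I would prove $\lres{s}{1_Q}=c\,1_Q$ by two opposite inequalities, both extracted from the support axioms \eqref{sppq}. For $\lres{s}{1_Q}\le c\,1_Q$: the second axiom in \eqref{sppq} yields $\lres{\spp_Q(c)}{1_Q}\le cc^*1_Q=cc\,1_Q$, using $c^*=c$; then, since $c\,1_Q\le 1_Q$ and multiplication is monotone, $c(c\,1_Q)\le c\,1_Q$, whence $\lres{s}{1_Q}\le c\,1_Q$. For the reverse inequality, the third axiom in \eqref{sppq} gives $\lres{s}{c}=c$, so by the $A$-$A$-quantale law \eqref{AAquantale} one has $c\,1_Q=(\lres{s}{c})1_Q=\lres{s}{(c\,1_Q)}\le\lres{s}{1_Q}$, again using $c\,1_Q\le 1_Q$ and monotonicity of the $A$-action. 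Combining the two inequalities gives $\lres{s}{1_Q}=c\,1_Q$, and substituting into the reduction of the previous paragraph yields $\lres{s}{q}=(\lres{s}{1_Q})\wedge q=\langle x,x\rangle 1_Q\wedge q$, as required.

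The only genuinely delicate point is that $Q$ need not be unital, since the groupoid quantale of a non-\'etale open groupoid has no multiplicative unit $e$; I must therefore avoid every temptation to write $c=ce$ or to invoke $c\le c\,1_Q$ through the unit. The argument above is designed to sidestep this entirely, relying only on $1_Q$ being the top element (so that $c\,1_Q\le 1_Q$) together with the two inequalities coming from \eqref{sppq} and the module–quantale support identification of Theorem~\ref{theorem:stableinner1}. I expect this unit-free bookkeeping to be the sole obstacle; once it is respected, the computation is entirely routine.
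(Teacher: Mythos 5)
Your proof is correct and follows the same skeleton as the paper's: pass through Theorem~\ref{theorem:stableinner1} to identify $\spp_X(x)$ with $\spp_Q(\langle x,x\rangle)$, establish $\lres{\spp_X(x)}{1_Q}=\langle x,x\rangle 1_Q$, and then get the general case from the quantal-frame law \eqref{qf3} together with the fact that $1_Q$ is the top element. The one genuine difference is the middle step: the paper imports the identity $\lres{\spp_Q(c)}{1_Q}=c\,1_Q$ wholesale from the author's thesis (\cite[Lemma 4.1.11(6)]{QuijanoPhD}), whereas you derive it inline from the support axioms \eqref{sppq}, which makes the corollary self-contained. Two small remarks on your derivation. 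First, the self-adjointness $c^*=c$ from \eqref{hilbertmod4} is not actually needed: for an arbitrary element $c\in Q$ the second axiom of \eqref{sppq} already gives $\lres{\spp_Q(c)}{1_Q}\le cc^*1_Q\le c\,1_Q$, since $c^*1_Q\le 1_Q$ and multiplication is monotone; combined with your reverse inequality this proves the cited thesis lemma in full generality, for every element of $Q$ and not just for inner products. Second, your concern about non-unitality is well placed and your argument does respect it, using only that $1_Q$ is the top of $Q$; this is precisely why the statement is phrased with $\langle x,x\rangle 1_Q$ rather than $\langle x,x\rangle$ alone.
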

\begin{proof}
Applying Theorem~\ref{theorem:stableinner1} we obtain
\begin{align*}
\lres{\spp_X(x)}{1_Q}&=\lres{\spp_Q(\langle x,x\rangle)}{1_Q}= \langle x,x\rangle1_Q\;.& \text{(by \cite[Lemma 4.1.11(6)]{QuijanoPhD})}
\end{align*}
Therefore, for all $q\in Q$ we have
\[ \lres{\spp_X(x)}{q}=(\lres{\spp_X(x)}{1_Q})\wedge q= \langle x,x\rangle1_Q\wedge q\;. \]
\end{proof}

We stress out that the existence of a support is a property of a pre-Hilbert $Q$-modules rather than extra structure.

\begin{lemma}\label{lemma:framehomo}
Let $Q$ be a groupoid quantale and $X$ be a stably supported $Q$-module. The mapping
$\lres{(-)}{1_X}:A\rightarrow X$ is the right adjoint of $\spp_X: X\rightarrow A$. In particular, the support of $X$ respects arbitrary joins and it is unique.
\end{lemma}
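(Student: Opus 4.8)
The plan is to exhibit $\spp_X$ and $\lres{(-)}{1_X}$ as an adjoint pair, i.e.\ to establish that for all $x\in X$ and $a\in A$ one has $\spp_X(x)\leq a$ if and only if $x\leq\lres{a}{1_X}$. Both maps are monotone (the support by hypothesis, and $\lres{(-)}{1_X}$ because the $A$-action on $X$ preserves order), so by the standard characterization of Galois connections between posets it suffices to verify the unit inequality $x\leq\lres{\spp_X(x)}{1_X}$ and the counit inequality $\spp_X(\lres{a}{1_X})\leq a$.

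For the unit I would start from \eqref{sppmod4}, which gives $\lres{\spp_X(x)}{x}=x$. Since $1_X$ is the top of $X$ we have $x=1_X\wedge x$, and \eqref{modgq3} then rewrites $\lres{\spp_X(x)}{x}=\lres{\spp_X(x)}{(1_X\wedge x)}=(\lres{\spp_X(x)}{1_X})\wedge x$. Combining the two identities yields $x=(\lres{\spp_X(x)}{1_X})\wedge x$, whence $x\leq\lres{\spp_X(x)}{1_X}$, which is exactly the unit.

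For the counit I would invoke the $A$-equivariance of $\spp_X$ proved in Theorem~\ref{theorem:stableinner1}. Applying \eqref{sppequivariance} with $x=1_X$ gives $\spp_X(\lres{a}{1_X})=a\wedge\spp_X(1_X)$, and then \eqref{sppmod1} simplifies this to $\spp_X(\lres{a}{1_X})=a\wedge 1_A=a$; in particular $\spp_X(\lres{a}{1_X})\leq a$, so the counit holds (in fact with equality). Together with the monotonicity of the two maps, these inequalities yield the desired adjunction $\spp_X\dashv\lres{(-)}{1_X}$.

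The two ``in particular'' assertions then follow formally. As a left adjoint between complete lattices, $\spp_X$ preserves all joins, which is the first claim. For uniqueness I would observe that the candidate right adjoint $\lres{(-)}{1_X}$ depends only on the underlying $Q$-module structure of $X$ and not on the chosen support; since a left adjoint is uniquely determined by its right adjoint, any two supports satisfying the axioms are both left adjoint to the same map, hence coincide, which is the promised uniqueness (and makes precise the preceding remark that admitting a support is a property rather than extra structure). The only step requiring genuine input is the counit, which rests on stability through the $A$-equivariance of Theorem~\ref{theorem:stableinner1}; everything else is a direct manipulation of the module axioms.
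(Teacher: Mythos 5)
Your proof is correct and follows essentially the same route as the paper's: the unit inequality comes from \eqref{sppmod4} (the paper gets $x=\lres{\spp_X(x)}{x}\leq\lres{\spp_X(x)}{1_X}$ directly by monotonicity, while you route it through \eqref{modgq3}, which is equivalent), and the counit is exactly the paper's computation $\spp_X(\lres{a}{1_X})=a\wedge\spp_X(1_X)=a$ via the equivariance of Theorem~\ref{theorem:stableinner1} and \eqref{sppmod1}. Your explicit uniqueness argument (two supports would be left adjoints of the same map $\lres{(-)}{1_X}$, hence equal) is a correct spelling-out of what the paper leaves implicit.
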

\begin{proof}
The unit of the adjunction follows from \eqref{sppmod4}, because $x={\lres{\varsigma(x)}{x}}\leq {\lres{\varsigma(x)}{1_X}}$, and the co-unit follows from the equivariance of $\spp_X$ (\cf, Theorem~\ref{theorem:stableinner1}),
\begin{align*}
\varsigma_X(\lres{a}{1_X}) &=a\wedge \varsigma_X(1_X)& \text{[by \eqref{sppequivariance}]}\\ 
&= a \wedge 1_A& \text{[by \eqref{sppmod1}]}\\ 
&= a\;.
\end{align*}
\end{proof}

\begin{lemma}
Let $Q$ be a groupoid quantale and $X$ be a stably supported $Q$-module. Let us assume that, for all $x\in X$ and $b\in A$, the following hold:
\begin{enumerate}
\item $\lres{b}{1_X}\leq \langle x,x \rangle\cdot 1_X$.
\item $\lres{b}{x}=x$,
\end{enumerate} 
Then, $b=\spp_X(x)$.
\end{lemma}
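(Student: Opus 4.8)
The plan is to establish the equality $b=\spp_X(x)$ by antisymmetry, proving the two inequalities $\spp_X(x)\leq b$ and $b\leq\spp_X(x)$ separately. Each is obtained by applying the support $\spp_X$ to one of the two hypotheses and then invoking the properties of $\spp_X$ that were already extracted from stability earlier in this section, above all the $A$-equivariance of $\spp_X$ and the identity relating $\spp_X$ to $\spp_Q$ through the inner product.

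For the inequality $\spp_X(x)\leq b$, I would start from hypothesis~(2), that is $\lres{b}{x}=x$, and apply $\spp_X$ to both sides. Using the $A$-equivariance of $\spp_X$ from Theorem~\ref{theorem:stableinner1} (equation~\eqref{sppequivariance}), the left-hand side rewrites as $\spp_X(\lres{b}{x})=b\wedge\spp_X(x)$, while the right-hand side is simply $\spp_X(x)$. Hence $b\wedge\spp_X(x)=\spp_X(x)$, which is exactly $\spp_X(x)\leq b$.

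For the reverse inequality $b\leq\spp_X(x)$, I would apply $\spp_X$ to hypothesis~(1), namely $\lres{b}{1_X}\leq\langle x,x\rangle\cdot 1_X$, using that $\spp_X$ is monotone. On the left, equivariance together with the normalisation axiom~\eqref{sppmod1} gives $\spp_X(\lres{b}{1_X})=b\wedge\spp_X(1_X)=b\wedge 1_A=b$. On the right, stability of the support~\eqref{stablespp} yields $\spp_X(\langle x,x\rangle\cdot 1_X)\leq\spp_Q(\langle x,x\rangle)$, and the key identity $\spp_Q(\langle x,x\rangle)=\spp_X(x)$ from Theorem~\ref{theorem:stableinner1} collapses this bound to $\spp_X(x)$. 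Chaining these relations gives $b\leq\spp_X(x)$, and combined with the previous paragraph this yields the desired equality.

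The argument is short and I do not anticipate a genuine obstacle; the only point requiring care is to recall that the two facts doing the real work — the $A$-equivariance of $\spp_X$ and the identity $\spp_Q(\langle x,x\rangle)=\spp_X(x)$ — are \emph{not} part of the bare definition of a supported module but were derived from stability in Theorem~\ref{theorem:stableinner1}. Thus the lemma genuinely relies on the standing hypothesis that $X$ is \emph{stably} supported. Conceptually, the statement says that conditions~(1) and~(2) pin down the value $\spp_X(x)$ uniquely, recovering at the level of individual elements the global uniqueness of the support already observed in Lemma~\ref{lemma:framehomo}.
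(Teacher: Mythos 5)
Your proof is correct and follows exactly the paper's own argument: the inequality $\spp_X(x)\leq b$ comes from applying $A$-equivariance to hypothesis~(2), and the reverse inequality from applying the monotone support to hypothesis~(1), using equivariance, the normalisation $\spp_X(1_X)=1_A$, stability, and the identity $\spp_Q(\langle x,x\rangle)=\spp_X(x)$ of Theorem~\ref{theorem:stableinner1}. Your closing remark that these tools are consequences of stability rather than of the bare definition of supported module is also accurate.
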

\begin{proof}
Using \eqref{sppequivariance} we obtain $b\wedge \spp_X(x)=\spp_X(\lres{b}{x})=\spp_X(x)$. Thus, $\varsigma_X(x) \leq b$. Finally, we have 
\begin{align*}
b &=b\wedge 1_A\\
&=b\wedge \varsigma_X(1_X)& \text{[by \eqref{sppmod1}]}\\
&=\varsigma_X(\lres{b}{1_X})& \text{($\spp_X$ is equivariant)}\\
&\leq \varsigma_X(\langle x,x \rangle\cdot 1_X)\\
&\leq \varsigma_Q(\langle x,x \rangle)& \text{($\spp_X$ is stable)}\\
&= \varsigma_X(x)\;.& \text{(by Theorem~\ref{theorem:stableinner1})}
\end{align*}    
\end{proof}

Finally, to close this section we shall provide some equivalent formulations of stability, which will be use later in this paper. 

\begin{lemma}\label{lemma:stablesupport}
Let $Q$ be a groupoid quantale and $X$ be a supported $Q$-module. Then, the following properties are equivalent, for all $x\in X$ and $q\in Q$:
\begin{align}
\varsigma_X(q\cdot x) &= \varsigma_Q(\rres{q}{\varsigma_X(x)})\label{stable1}\\
\varsigma_X(q\cdot x) &\leq \varsigma_Q(q)& \text{($\spp_X$ is stable)}\label{stable2}\\
\varsigma_X(q\cdot 1_X)&\leq \varsigma_Q(q)\;.\label{stable3}
\end{align}

\end{lemma}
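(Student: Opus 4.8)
The plan is to prove the cyclic chain of implications $\eqref{stable1}\Rightarrow\eqref{stable2}\Rightarrow\eqref{stable3}\Rightarrow\eqref{stable1}$, which is the cleanest route for three equivalent conditions. The implication $\eqref{stable1}\Rightarrow\eqref{stable2}$ should be nearly immediate: from $\varsigma_X(q\cdot x)=\varsigma_Q(\rres{q}{\varsigma_X(x)})$ I would apply the stability of the support $\spp_Q$ on the quantale (recorded in \eqref{supportstable}, in the form $\spp(\rres{x}{\spp(y)})\le\spp(x)$, or equivalently the monotonicity of $\spp_Q$ together with $\rres{q}{a}\le q$ since $a\le 1_A$) to conclude $\varsigma_Q(\rres{q}{\varsigma_X(x)})\le\varsigma_Q(q)$. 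The implication $\eqref{stable2}\Rightarrow\eqref{stable3}$ is just the special case $x=1_X$, so there is nothing to do beyond substitution.

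The substantive direction is $\eqref{stable3}\Rightarrow\eqref{stable1}$, which I expect to be the main obstacle. Here I would aim to establish the equality $\varsigma_X(q\cdot x)=\varsigma_Q(\rres{q}{\varsigma_X(x)})$ by proving both inequalities. For the inequality $\ge$, the idea is to use \eqref{sppmod4} to write $x=\lres{\varsigma_X(x)}{x}$, whence $q\cdot x=q\cdot(\lres{\varsigma_X(x)}{x})=(\rres{q}{\varsigma_X(x)})\cdot x$ by the module axiom \eqref{modgq2}; combining this with the monotonicity of $\spp_X$ and the bound $(\rres{q}{\varsigma_X(x)})\cdot x\le(\rres{q}{\varsigma_X(x)})\cdot 1_X$, I reduce to comparing $\varsigma_X((\rres{q}{\varsigma_X(x)})\cdot 1_X)$ with $\varsigma_Q(\rres{q}{\varsigma_X(x)})$, and then I would try to leverage \eqref{stable3} applied to the element $\rres{q}{\varsigma_X(x)}$ in place of $q$. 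The reverse inequality $\le$ should follow from \eqref{stable3} applied to $q$ directly, together with the equivariance relations for $\spp_X$ and $\spp_Q$ and the identity $q\cdot x=(\rres{q}{\varsigma_X(x)})\cdot x\le(\rres{q}{\varsigma_X(x)})\cdot 1_X$.

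The delicate point will be managing the interplay between $\spp_X$ on the module and $\spp_Q$ on the quantale: I will need to know that $\varsigma_X(r\cdot 1_X)=\varsigma_Q(r)$ for right-sided-type elements $r$, or at least the sharp comparison in both directions, which is exactly where \eqref{stable3} (with the quantale's own equivariance from the preliminaries) must be exploited rather than the weaker \eqref{stable2}. I would therefore first isolate the lemma-like fact that $\varsigma_X(r\cdot 1_X)=\varsigma_Q(r)$ whenever $r$ is right-sided (using that $\varsigma_Q$ is an order isomorphism $\rs(Q)\cong A$ as noted after \eqref{sppq}, together with \eqref{sppmod2}), and then verify that $\rres{q}{\varsigma_X(x)}$ behaves well enough under these maps. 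Once the equality $\varsigma_X(q\cdot 1_X)=\varsigma_Q(q)$ is promoted from the inequality \eqref{stable3} via \eqref{sppmod2} and $\spp_Q$'s isomorphism property, substituting $\rres{q}{\varsigma_X(x)}$ for $q$ and unwinding the equivariance of both supports should close the circle and deliver \eqref{stable1}.
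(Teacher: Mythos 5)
Your cyclic plan is sound in outline and its easy legs are correct: \eqref{stable1}$\Rightarrow$\eqref{stable2} follows from monotonicity of $\spp_Q$ since $\rres{q}{\varsigma_X(x)}\le q$, and \eqref{stable2}$\Rightarrow$\eqref{stable3} is the substitution $x=1_X$. (A side remark: you call \eqref{stable2} ``weaker'' than \eqref{stable3}, but it is the other way around formally --- \eqref{stable3} is the special case $x=1_X$, and the two are trivially equivalent because $q\cdot x\le q\cdot 1_X$ and $\spp_X$ is monotone, which is how the paper dispatches this in one sentence.) Your derivation of the inequality $\varsigma_X(q\cdot x)\le\varsigma_Q(\rres{q}{\varsigma_X(x)})$ via \eqref{sppmod4}, \eqref{modgq2} and stability applied to $\rres{q}{\varsigma_X(x)}$ is also correct and coincides with the paper's first display. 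Even your auxiliary facts are provable as stated: $\varsigma_X(r\cdot 1_X)=\varsigma_Q(r)$ for right-sided $r$ (using $r={\lres{\spp_Q(r)}{1_Q}}$, the equivariance of $\spp_X$ supplied by Theorem~\ref{theorem:stableinner1}, and $1_Q\cdot 1_X=1_X$, which follows from \eqref{sppmod2}), and hence $\varsigma_X(q\cdot 1_X)=\varsigma_Q(q)$ for all $q$.

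The gap is the final step. After substituting $r=\rres{q}{\varsigma_X(x)}$ into the promoted equality you hold $\varsigma_Q(r)=\varsigma_X(r\cdot 1_X)$, and what remains to prove is exactly $\varsigma_X(r\cdot 1_X)\le\varsigma_X(r\cdot x)$, i.e.\ that acting on $x$ rather than on all of $1_X$ does not shrink the support once $q$ has been restricted on the right to $\varsigma_X(x)$. ``Unwinding the equivariance of both supports'' cannot deliver this: equivariance only lets you pull \emph{left} $A$-restrictions out of $\spp_X$ and $\spp_Q$, whereas here you need to pass from the quantale side back to the module side with a \emph{lower} bound on $\varsigma_X(q\cdot x)$, and the only axiom bridging the two sides in that direction is the Hilbert-module identity \eqref{hilbertmod1}, $\langle q\cdot x,y\rangle=q\langle x,y\rangle$ --- which your proposal never invokes. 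That identity is the engine of the paper's proof: writing ${\lres{\varsigma_X(x)}{1_Q}}=\langle x,x\rangle 1_Q$ (Corollary~\ref{cor:restQ}), the paper computes $\varsigma_Q(\rres{q}{\varsigma_X(x)})=\varsigma_Q(q\langle x,x\rangle)=\varsigma_Q(\langle q\cdot x,x\rangle)$, then uses \eqref{sppmod4}, \eqref{modgq1}, \eqref{hilbertmod1} again, and the stability and equivariance of $\spp_Q$ to bound this by $\varsigma_X(q\cdot x)$, closing a sandwich that forces all the inequalities to be equalities. Equivalently and more briefly: once stability holds, Theorem~\ref{theorem:stableinner1} gives $\varsigma_X(q\cdot x)=\varsigma_Q(\langle q\cdot x,1_X\rangle)$, which equals $\varsigma_Q(q\langle x,1_X\rangle)$ by \eqref{hilbertmod1}, which equals $\varsigma_Q(\rres{q}{\varsigma_Q(\langle x,1_X\rangle)})=\varsigma_Q(\rres{q}{\varsigma_X(x)})$ by the stability law \eqref{supportstable} and \eqref{sppformula}. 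Until your argument brings the inner product into play at this point, the circle does not close.
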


\begin{proof}
The conditions \eqref{stable2} and \eqref{stable3} are clearly equivalent. Let us assume \eqref{stable1} and prove \eqref{stable2}. Indeed, for all $q\in Q$ and $x\in X$, we have $\varsigma_X(q\cdot x)= \varsigma_Q(\rres{q}{\varsigma_X(x)})\leq \varsigma_Q(q)$. Conversely, assume \eqref{stable2}, then
\begin{align*}
\varsigma_X(q\cdot x) &= \varsigma_X(q\cdot (\lres{\varsigma_X(x)}{x})) = \varsigma_X((\rres{q}{\varsigma_X(x)})\cdot x)& \text{[by \eqref{modgq2}]}\\
&\leq \varsigma_Q(\rres{q}{\varsigma_X(x)})\;.& \text{($\spp_Q$ is stable)}
\end{align*}
Applying the equivariance of $\varsigma_Q$ (and, in particular, the stability) the following sequence of (in)equalities will prove \eqref{stable1}:
\begin{align*}
\varsigma_X(q\cdot x) &\leq \varsigma_Q(\rres{q}{\varsigma_X(x)})=\varsigma_Q((\rres{q}{\varsigma_X(x)})1_Q)& \text{(by \cite[Proposition 4.1.15(2)]{QuijanoPhD})}\\
&=\varsigma_Q(q(\lres{\varsigma_X(x)}{1_Q}))\\
&=\varsigma_Q(q\langle x,x\rangle{1_Q})& \text{(by Corollary~\ref{cor:restQ})}\\
&=\varsigma_Q(q\langle x,x \rangle)& \text{($\spp_Q$ is stable)}\\
&= \varsigma_Q(\langle q\cdot x,x \rangle)& \text{[by \eqref{hilbertmod1}]}\\
&= \varsigma_Q(\langle \lres{\varsigma_X(q\cdot x)}{(q\cdot x)},x \rangle)& \text{[by \eqref{sppmod4}]}\\
&= \varsigma_Q(\langle( \lres{\varsigma_X(q\cdot x)}{q})\cdot x,x \rangle)& \text{[by \eqref{modgq1}]}\\
&= \varsigma_Q(( \lres{\varsigma_X(q\cdot x)}{q}) \langle x,x \rangle)& \text{[by \eqref{hilbertmod1}]}\\
&\leq \varsigma_Q(\lres{\varsigma_X(q\cdot x)}{q})\\
&= \varsigma_X(q\cdot x)\wedge \varsigma_Q(q)\\
&\leq \varsigma_X(q\cdot x)\;.
\end{align*}

\end{proof}

\section{Principal $Q$-locales}\label{section:principallocales}

\paragraph{$Q$-locales.}
Let $Q$ be a groupoid quantale. By a $Q$-locale will be meant a stably supported $Q$-module $X$ satisfying the following conditions:
\begin{description}
\item{\textbf{Q1:}}\label{multiplicativity} The mapping $\alpha_*: X\to Q\otimes_A X$ given by $\alpha_*(x)= \V_{q\cdot y\leq  x} q\otimes y$ preserves arbitrary joins [\cf\ \eqref{eq:rightadjointalpha}]. 
\item{\textbf{Q2:}}\label{unitlaws} $\V_{q\cdot y\leq x} (\lres{\upsilon(q)}{y})=x$\quad $\forall q\in Q$ and $\forall x,y\in X$.
\end{description}
The category of $Q$-locales, $Q$-\Loc, is that whose objects are the $Q$-locales and whose morphisms $f : Y\to X$ are frame homomorphism such that $f$ is a homomorphism of $Q$-modules and $A$-modules such that the following diagram commutes:
\[ \xymatrix{
A && X \ar_{\spp_X}[ll]\\
&& Y\ar^{\spp_Y}[ull]\ar_{f}[u],
}
\]
\ie\ the following condition holds:
\begin{equation}
\spp_X(f(y))=\spp_Y(y),  
\end{equation}

\begin{theorem}\label{theo:fullopenlocale}
Let $Q$ be a groupoid quantale and $X$ be $Q$-locale. Then $X$ is a fully open $G$-locale, where $G=\mathcal{G}(Q)$.
\end{theorem}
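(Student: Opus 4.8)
Starting from a $Q$-locale $X$ (with quantale $Q = \opens(G)$ for an open groupoid $G = \groupoid(Q)$), I need to manufacture a genuine $G$-locale structure $(X, \act, p)$ and verify three things: the anchor map $p$ is open, $p$ is surjective (so $X$ is fully open), and the action diagrams (Pullback, Associativity, Unitary) commute.

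Let me think about the correspondence. We have $Q = \opens(G)$, so $A = \opens(G_0)$ is the base locale. The right-sided elements $\rs(Q) \cong A \cong G_0$. For a $Q$-locale, we have a support $\spp_X : X \to A$ and the $A$-module structure $a \mapsto {\lres{a}{1_X}}$.

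**How to get the anchor map.** The anchor $p : X \to G_0$ should come from the $A$-module structure. Since $A = \opens(G_0)$, the map $a \mapsto {\lres{a}{1_X}}$ is a frame homomorphism $A \to X$ (by condition Q or by the module axioms), hence corresponds to a locale map $p : X \to G_0$ with $p^* = (a \mapsto {\lres{a}{1_X}})$. By Lemma on frame homomorphism, $\spp_X$ is the right adjoint of $p^*$, which means $p$ is semiopen with direct image $p_! = \spp_X$. The Frobenius condition (openness) should follow from the module axioms, specifically equation \eqref{modgq3} which gives the "distribution" $\lres{a}{x} = (\lres{a}{1_X}) \wedge x$. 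Surjectivity of $p$ follows because $p^*$ is injective: $\spp_X \circ p^* = \ident$ (from $\spp_X(\lres{a}{1_X}) = a$), so $p^*$ has a left inverse, hence $p$ is surjective.

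**How to get the action.** The action $\act : G_1 \times_{G_0} X \to X$ should come from the $Q$-module structure $(q, x) \mapsto q \cdot x$. Since $Q = \opens(G_1)$ (as a frame), and $G_1 \times_{G_0} X$ has frame $\opens(G_1) \otimes_{\opens(G_0)} X = Q \otimes_A X$, conditions Q1 and Q2 are precisely engineered to guarantee that the sup-lattice map $\alpha : Q \otimes_A X \to X$, $\alpha(q \otimes x) = q \cdot x$, is actually a frame homomorphism (or at least that $\alpha_*$ is the inverse image of a locale map). Q1 (that $\alpha_*$ preserves joins) ensures $\alpha_*$ is itself a left adjoint, i.e., $\alpha$ has the right form; Q2 (the unit-law condition) is what forces $\alpha$ to preserve the top and behave as a frame map so that $\act^* = \alpha_*$ defines $\act : G_1 \times_{G_0} X \to X$.

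**The plan of attack.** First I will verify that $p^* = (a \mapsto \lres{a}{1_X})$ is a frame homomorphism and define $p$; then show $p$ is an open surjection using Lemma~\ref{lemma:framehomo} (giving $p_! = \spp_X$), the Frobenius identity from \eqref{modgq3}, and $\spp_X \circ p^* = \ident$. Second, I will use Q1 and Q2 to show $\alpha_* : X \to Q\otimes_A X$ is the inverse image homomorphism of a locale map $\act$, checking it preserves finite meets and the top. Third, I will verify the three commuting diagrams. The Pullback diagram reduces to $p \circ \act = d \circ \pi_1$, which at the level of frames is the compatibility of the $Q$-action support with the domain map; I expect this to follow from the stability axiom \eqref{stablespp} together with Theorem~\ref{theorem:stableinner1}. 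The Unitary diagram should follow from $\lres{\spp_X(x)}{x} = x$ (axiom \eqref{sppmod4}) combined with the unit $e \in Q$.

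**The main obstacle.** The hard part will be the Associativity diagram and, before that, proving that $\alpha$ (equivalently $\alpha_*$) really is the inverse image of a locale map $\act$ — i.e., that $\act^* = \alpha_*$ is a frame homomorphism preserving finite meets. This is where conditions Q1 and Q2 must do their real work: Q1 supplies that $\alpha_*$ is a sup-lattice map, but the preservation of binary meets (the frame condition) is delicate and will require the quantal-frame distributivity \eqref{qf3} of $Q$ together with the module meet-distributivity \eqref{modgq3}, and Q2 to pin down the behavior on the unit. I anticipate reconstructing the $G$-action by reversing the module-from-action construction recorded in the preliminaries (the factorization of $\opens(G) \otimes X \to X$ through $\opens(G) \otimes_{\opens(G_0)} X$ with right adjoint $\alpha_*$ as in \eqref{eq:rightadjointalpha}), so that the associativity and unit laws of the $Q$-module translate termwise into the Associativity and Unitary diagrams of the $G$-action. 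The bookkeeping between $\otimes$ and $\times_{G_0}$, and checking that openness of $d$ makes these tensor products behave as pullbacks, is where most of the genuine care is needed.
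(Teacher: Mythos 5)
Your overall architecture coincides with the paper's: anchor map from the support ($p_!=\spp_X$, $p^*=\lres{(-)}{1_X}$ via Lemma~\ref{lemma:framehomo}), openness from Frobenius, surjectivity from \eqref{sppmod1}, action defined by $\act^*=\alpha_*$, then the three diagrams. But there are genuine gaps in the core step. You have misidentified why $\alpha_*$ is a frame homomorphism, and you flag this as ``delicate'': it is not, because $\alpha_*$ is by construction the right adjoint of the sup-lattice homomorphism $\alpha:Q\otimes_A X\to X$, and right adjoints preserve \emph{all} meets, including the empty one (the top). The only thing that can fail is preservation of joins, and that is literally what \textbf{Q1} asserts; nothing else is needed. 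Your plan to derive binary-meet preservation from \eqref{qf3} and \eqref{modgq3}, with \textbf{Q2} ``to pin down the behavior on the unit'', attacks a non-problem and, worse, spends \textbf{Q2} in the wrong place: \textbf{Q2} has nothing to do with defining $\act$; its actual and only role is the \textbf{Unitary} diagram, via $[p^*\circ\upsilon,\ident_X](\act^*(x))=\V_{q\cdot y\leq x}(\lres{\upsilon(q)}{y})=x$. Since you instead claim \textbf{Unitary} follows from \eqref{sppmod4} ``combined with the unit $e\in Q$'', that axiom is left without a valid proof in your proposal.

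The appeal to ``the unit $e\in Q$'' is itself a conceptual error, not a slip: for an open non-\'etale groupoid $G$, the groupoid quantale $Q=\opens(G)$ has no multiplicative unit in general --- unitality is exactly what singles out inverse quantal frames, i.e.\ the \'etale case --- and the whole reflexive/supported apparatus ($\upsilon$, $\spp_Q$, the $A$-$A$-bimodule structure) exists precisely to compensate for the missing unit, so no step of this proof may invoke $e$. Smaller points: your Frobenius computation for $p$ needs the equivariance $\spp_X(\lres{a}{x})=a\wedge\spp_X(x)$ of Theorem~\ref{theorem:stableinner1}, not \eqref{modgq3} alone; the adjunction runs $\spp_X\dashv\lres{(-)}{1_X}$, so $\spp_X$ is the \emph{left} adjoint of $p^*$ (you state the direction backwards, though your conclusion $p_!=\spp_X$ is the right one); and \textbf{Associativity} is not ``the hard part'' --- since $\act_!=\alpha$ and a semiopen map is determined by its direct image, it follows immediately from associativity of the module action. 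The one difficult diagram you located correctly is \textbf{Pullback}, which indeed rests on stability, in the form $\spp_X(q\cdot x)=\spp_Q(\rres{q}{\spp_X(x)})$ of Lemma~\ref{lemma:stablesupport}, together with the formula $(\pi_1)_!(q\otimes x)=\rres{q}{\spp_X(x)}$.
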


\begin{proof}
Since $X$ is a stably supported $Q$-module, let us put $p_!=\spp_X$, then by Lemma~\ref{lemma:framehomo} the mapping $p^*=\lres{(-)}{1_X}$ is the right adjoint of $p_!$ and therefore it is a frame homomorphism and the Frobenius condition is also satisfied, for all $x\in X$ and $a\in A$:
\begin{align*}
p_!(x\wedge p^*(a))&= \spp_X(x\wedge (\lres{a}{1_X}))\\
&= \spp_X(\lres{a}{x})& \text{(by \eqref{modgq3})}\\
&= a\wedge\spp_X(x)& \text{(by Theorem~\ref{theorem:stableinner1})}\\
&= a\wedge p_!(x)\;.
\end{align*}
Hence, we have an open map of locales $p:X\to G_0$ which is also surjective due to \eqref{sppmod1}.

Since the pullback of $r$ and $p$ is, in \Frm, the quotient of the frames coproduct $\opens(G_1)\otimes_{\opens(G_0)} X$ generated by the equalities:
\begin{equation}\label{pullback1}
\pi^*_1(r^*(a))=\pi^*_2(p^*(a))\;,\quad \text{for all $a\in A$,}
\end{equation}
if we stabilize (\ref{pullback1}) under finite meets, and noticing that $p^*(a)\wedge x={\lres{a}{x}}$ and $q\wedge r^*(a)=\rres{q}{a}$, it turns out that $G_1\times_{G_0} X$ coincides with the sup-lattice quotient generated by the equality $\rres{q}{a}\otimes x=q\otimes {\lres{a}{x}}$. Now, we can define an action $\act:G_1\times_{G_0} X\to X$ given by $\act^*=\alpha_*$ because the right adjoint $\alpha_*$ of the module action $\alpha:Q\otimes_A X\to X$ preserves arbitrary joins due to \textbf{Q1}. Let us prove now that $(X,\act,p)$ is a $G$-locale, in order to show that, we must verify the axioms of the definiton:
\begin{description}
\item{\textbf{Pullback}:} To show that
\[
\vcenter{\xymatrix{
 G_1\times_{G_0} X  \ar[d]_{\pi_1} \ar[r]^-{\act} & X \ar[d]^{p}  \\
 G_1  \ar[r]_-{d} & G_0 }
}
\]
is commutative, we shall show that
\begin{equation}\label{diag:directimagepullback}
\vcenter{\xymatrix{
 Q\otimes_A X  \ar[d]_{(\pi_1)_!} \ar[r]^-{\act_!=\alpha} & X \ar[d]^{p_!=\spp_X}  \\
 Q  \ar[r]_-{d_!=\spp_Q} & A, }
}
\end{equation}
is commutative. Firstly, recall from \cite[Lemma 3.1]{Funct2} that $(\pi_1)_!(q\otimes x)=\rres{q}{\spp_X(x)}$ for all $q\in Q$ and $x\in X$.
To show the diagram \eqref{diag:directimagepullback} is commutative, we apply Lemma~\ref{lemma:stablesupport}
\begin{equation*}
\vcenter{\xymatrix{
 q\otimes x  \ar@{|->}[d]_{(\pi_1)_!} \ar@{|->}[r]^-{\alpha} & q\cdot x \ar@{|->}[d]^{\spp_X}  \\
 \rres{q}{\spp_X(x)}  \ar@{|->}[r]_-{\spp_Q} & \spp_Q(\rres{q}{\spp_X(x)})=\spp_X(q\cdot x)}
}
\end{equation*}
\item{\textbf{Associativity}:} The associativity of $\act$ follows in a straightforward way
from the associativity of $\alpha$ because $ \alpha=\act_!$. (This is completely analogous to
the way in which the associativity of the multiplication of an open groupoid follows from the associativity of the multiplication of its quantale, (\cf\ \cite[Theorem 4.8]{Re07}).
\item{\textbf{Unitary}:} In order to prove this, we shall use the inverse image of $\act\circ \langle u\circ p, \ident_X \rangle$. In fact, for all $x\in X$, we have
\begin{align*}
([p^*\circ u^*,\ident_X]\circ \act^*)(x) & = [p^*\circ \upsilon,\ident_X](\V_{q\cdot y\leq x} q\otimes y)\\
& = \V_{q\cdot y\leq x} p^*(\upsilon(q)) \wedge y\\
& =  \V_{q\cdot y\leq x} (\lres{\upsilon(q)}{1_X}) \wedge y\\
& = \V_{q\cdot y\leq x} (\lres{\upsilon(q)}{y})\\
& = x& \text{(by \textbf{Q2})}
\end{align*}
Hence, $\act\circ \langle u\circ p, \ident_X \rangle=\ident_X$ as required.
\end{description}
Thus, $(X,\act,p)$ is a fully open $G$-locale.
\end{proof}

\begin{corollary}\label{cor:beck-chevalley}
Let $Q$ be a groupoid quantale and $X$ be a $Q$-locale. Then 
\begin{equation}
\lres{\spp_Q(q)}{z}=q\cdot 1_X\wedge z\;,
\end{equation} 
for all $q\in Q$.
\end{corollary}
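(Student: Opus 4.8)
The plan is to read off the conclusion from the fact, established in Theorem~\ref{theo:fullopenlocale}, that $X$ is a fully open $G$-locale for $G=\groupoid(Q)$, and then to apply the Beck--Chevalley condition of Lemma~\ref{lemma:BCcondition} --- which is the source of the name of this corollary. From the proof of that theorem I would carry over the dictionary $p^{*}=\lres{(-)}{1_X}$, $\act_!=\alpha$ (so that $\act_!(q\otimes x)=q\cdot x$), $p_!=\spp_X$, and $d_!=\spp_Q$ (the latter is exactly the labelling of the bottom arrow in diagram~\eqref{diag:directimagepullback}).

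First I would reduce to the case $z=1_X$. Since $\spp_Q(q)\in A$, the identity obtained from \eqref{modgq3} gives $\lres{\spp_Q(q)}{z}=(\lres{\spp_Q(q)}{1_X})\wedge z$, so it suffices to prove the single-variable statement $\lres{\spp_Q(q)}{1_X}=q\cdot 1_X$ and then meet both sides with $z$. Next I would set up the Beck--Chevalley square: the \textbf{Pullback} axiom of the $G$-locale $X$ says that the transpose
\[
\xymatrix{
G_1\times_{G_0} X \ar[d]_{\act} \ar[r]^-{\pi_1} & G_1 \ar[d]^{d} \\
X \ar[r]_-{p} & G_0
}
\]
is a pullback in \Loc. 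Because $G$ is an open groupoid, $d$ is open, so Lemma~\ref{lemma:BCcondition} applies with $f=d$ and $g=\act$ and yields the Beck--Chevalley identity $p^{*}\circ d_!=\act_!\circ\pi_1^{*}$.

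Finally I would evaluate this identity at $q$. The inverse image of the projection satisfies $\pi_1^{*}(q)=q\otimes 1_X$ in the frame $\opens(G_1\times_{G_0} X)=Q\otimes_A X$, so using $d_!=\spp_Q$, $p^{*}=\lres{(-)}{1_X}$ and $\act_!=\alpha$ I would obtain
\[
\lres{\spp_Q(q)}{1_X}=p^{*}(\spp_Q(q))=p^{*}(d_!(q))=\act_!(\pi_1^{*}(q))=\act_!(q\otimes 1_X)=q\cdot 1_X,
\]
and meeting with $z$ then completes the proof. (As a sanity check one also recovers the inequality $q\cdot 1_X\le \lres{\spp_Q(q)}{1_X}$ directly from stability: $\spp_X(q\cdot 1_X)\le \spp_Q(q)$ by \eqref{stablespp}, whence $q\cdot 1_X=\lres{\spp_X(q\cdot 1_X)}{(q\cdot 1_X)}\le \lres{\spp_Q(q)}{1_X}$ using \eqref{sppmod4}; Beck--Chevalley is what supplies the reverse inequality cleanly.)

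The main obstacle is orienting Beck--Chevalley correctly: one must transpose the \textbf{Pullback} square so that the open map $d$ occupies the position to which Lemma~\ref{lemma:BCcondition} assigns $f$, thereby producing the useful composite $p^{*}\circ d_!$ rather than the unhelpful $d^{*}\circ p_!$, and one must pin down the formula $\pi_1^{*}(q)=q\otimes 1_X$ so that $\act_!\circ\pi_1^{*}$ collapses to the single-variable action $q\mapsto q\cdot 1_X$.
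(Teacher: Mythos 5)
Your proposal is correct and follows essentially the same route as the paper: both invoke Theorem~\ref{theo:fullopenlocale} to make $X$ a fully open $G$-locale, apply the Beck--Chevalley identity $\act_!\circ\pi_1^{*}=p^{*}\circ d_!$ to the \textbf{Pullback} square (with $d$ open), evaluate at $q$ using $\pi_1^{*}(q)=q\otimes 1_X$ to get $q\cdot 1_X=\lres{\spp_Q(q)}{1_X}$, and then meet with $z$ via \eqref{modgq3}. The only difference is cosmetic ordering --- you reduce to $z=1_X$ first, the paper meets with $z$ last --- so there is nothing to add.
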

\begin{proof} 
Let us put $G=\mathcal{G}(Q)$ the open groupoid of $Q$. By Theorem~\ref{theo:fullopenlocale} $X$ is a fully open $G$-locale. Therefore the diagram \[
\vcenter{\xymatrix{
 G_1\times_{G_0} X  \ar[d]_{\pi_1} \ar[r]^-{\act} & X \ar[d]^{p}  \\
 G_1  \ar[r]_-{d} & G_0 }
}
\] is a pullback. Then the Beck--Chevalley property yields
\[   \act_!\circ \pi_1^* = p^*\circ  d_!\;.      \]
Thus, for all $q\in Q$, we have
\[ q\cdot 1_X=\act_!(q\otimes 1_X)=(\act_!\circ\pi_1^*)(q)=(p^*\circ d_!)(q)= p^*(\spp_Q(q))={\lres{\spp_Q(q)}{1_X}}\;,   \]
which implies that for all $z\in X$:
\begin{align*}
\lres{\spp_Q(q)}{z} & = (\lres{\spp_Q(q)}{1_X}) \wedge z= q\cdot 1_X \wedge z\;.
\end{align*}
\end{proof}

\paragraph{Principal $Q$-locales.}

Let $Q$ be a groupoid quantale and $M$ be a locale. By a \emph{principal $Q$-locale over $M$} will be meant a $Q$-locale $X$ satisfying the following additional conditions:
\begin{description}
\item{\textbf{P1:}}\label{principalQbundle1}  \text{$X$ is an open $M$-locale with $\tspp(1_X)=1_M$}
\item{\textbf{P2:}}\label{principalQbundle2} $\tspp(q\cdot x)=\tspp(\lres{\spp_Q(q^*)}{x})$\quad \text{for all $q\in Q$ and $x\in X$}
\item{\textbf{P3:}}\label{principalQbundle3} The mapping $\varphi: X\otimes_M X\to  Q\otimes_{A} X$ given by 
\[ \varphi(x\otimes y)=\V_{q\cdot z\leq x} q\otimes (z\wedge y) \] 
is a frame isomorphism.
\end{description}

\begin{lemma}\label{lemma:principalQimpliesprincipalG}
Let $Q$ be a groupoid quantale and $X$ be a principal $Q$-bundle over $M$. Then $X$ is a fully open principal $G$-bundle over $M$, where $G=\mathcal{G}(Q)$.
\end{lemma}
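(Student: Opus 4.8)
The plan is to verify, one at a time, the defining clauses of a fully open principal $G$-bundle over $M$, reusing the $G$-locale structure already produced by Theorem~\ref{theo:fullopenlocale}. That theorem supplies a fully open $G$-locale $(X,\act,p)$ with $G=\mathcal{G}(Q)$, in which $\act_!=\alpha$ (so $\act_!(q\otimes x)=q\cdot x$), $\act^*=\alpha_*$ with $\alpha_*(x)=\V_{q\cdot z\leq x}q\otimes z$ [\cf\ \textbf{Q1} and \eqref{eq:rightadjointalpha}], and $p:X\to G_0$ is an open surjection with $p_!=\spp_X$ and $p^*={\lres{(-)}{1_X}}$. Recalling that $\opens(G_1\times_{G_0}X)=Q\otimes_A X$, it then remains to produce the bundle projection $\pi:X\to M$ and to check that (i) $\pi$ is an open surjection, (ii) the invariance square \eqref{diag:invarianceofaction} commutes, and (iii) the pairing map $\langle\act,\pi_2\rangle:G_1\times_{G_0}X\to X\times_M X$ is an isomorphism.

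First I would extract $\pi$ from \textbf{P1}: since $X$ is an open $M$-locale there is an open map $\pi:X\to M$ with $\pi_!=\tspp$, and openness yields the Frobenius identity. Evaluating it at $1_X$ and invoking $\tspp(1_X)=1_M$ gives $\pi_!(\pi^*(m))=\pi_!(1_X)\wedge m=1_M\wedge m=m$, so $\pi_!\circ\pi^*=\ident_M$; hence $\pi^*$ is injective and $\pi$ is an open surjection, settling (i) (the open-surjectivity of $p$ being already in hand, this also covers the ``fully open'' clause).

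For (ii), since $\pi$, $\act$ and $\pi_2$ are all open it suffices to compare direct images, and as these preserve joins it is enough to test equality on the join-generators $q\otimes x$ of $Q\otimes_A X$. The key computation is that of $(\pi_2)_!$: because the \textbf{Pullback} axiom exhibits $G_1\times_{G_0}X$ as the pullback of $r:G_1\to G_0$ along $p$, Beck--Chevalley (Lemma~\ref{lemma:BCcondition}) gives $(\pi_2)_!(\pi_1^*(q))=p^*(r_!(q))={\lres{r_!(q)}{1_X}}$, and the Frobenius condition for $\pi_2$ together with \eqref{modgq3} then yields $(\pi_2)_!(q\otimes x)={\lres{r_!(q)}{x}}={\lres{\spp_Q(q^*)}{x}}$, using $r_!(q)=\spp_Q(q^*)$ (since $r=d\circ i$ and $d_!=\spp_Q$). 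Consequently $\pi_!\act_!(q\otimes x)=\tspp(q\cdot x)$ while $\pi_!(\pi_2)_!(q\otimes x)=\tspp({\lres{\spp_Q(q^*)}{x}})$, and these coincide by exactly \textbf{P2}; thus $\pi\circ\act=\pi\circ\pi_2$ and $X$ is a $G$-bundle over $M$.

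Finally, for (iii) I would identify the inverse image of the pairing map with $\varphi$. Writing $\opens(X\times_M X)=X\otimes_M X$ with join-generators $x\otimes y$, and using $\text{pr}_i\circ\langle\act,\pi_2\rangle\in\{\act,\pi_2\}$, one computes $\langle\act,\pi_2\rangle^*(x\otimes y)=\act^*(x)\wedge\pi_2^*(y)=\alpha_*(x)\wedge(1_Q\otimes y)=\V_{q\cdot z\leq x}q\otimes(z\wedge y)=\varphi(x\otimes y)$, the third equality using $(q\otimes z)\wedge(1_Q\otimes y)=q\otimes(z\wedge y)$. Hence $\langle\act,\pi_2\rangle^*=\varphi$, which is a frame isomorphism by \textbf{P3}, so $\langle\act,\pi_2\rangle$ is an isomorphism of locales; combined with (i) and (ii) this shows $X$ is a fully open principal $G$-bundle over $M$. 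I expect the main obstacle to be the bookkeeping in (ii): getting the involution right in $(\pi_2)_!(q\otimes x)={\lres{\spp_Q(q^*)}{x}}$---i.e.\ recognizing that the relevant fibre product is taken over $r$ rather than $d$, so that the outcome matches \textbf{P2} exactly---and carefully justifying the reduction of the diagrammatic identities to direct-image computations on generators.
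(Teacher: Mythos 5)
Your proposal is correct and takes essentially the same route as the paper: it invokes Theorem~\ref{theo:fullopenlocale} for the fully open $G$-locale structure, then reads off the open surjection $\pi$ from \textbf{P1}, the invariance square \eqref{diag:invarianceofaction} from \textbf{P2} by comparing direct images on the generators $q\otimes x$, and the principality isomorphism from \textbf{P3} via the identification $\langle\act,\pi_2\rangle^*=[\act^*,\pi_2^*]=\varphi$. The only deviation is that where the paper quotes \cite[Lemma 3.1]{Funct2} for the formula $(\pi_2)_!(q\otimes x)={\lres{\spp_Q(q^*)}{x}}$, you re-derive it from the Beck--Chevalley condition and the Frobenius identity, which is a correct, self-contained substitute.
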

\begin{proof}
Due to Theorem~\ref{theo:fullopenlocale}, it remains to prove that $X$ is a principal $G$-bundle over $M$. The condition~\textbf{P1} implies that there exists an open surjective map of locales $\pi: X\to M$. The condition~\textbf{P2} guarantees the commutativity of \eqref{diag:invarianceofaction}, because the diagram 
\[
\vcenter{\xymatrix{
Q\otimes_{A} X\ar[r]^-{\act_!=\alpha} \ar[d]_{(\pi_2)_!}  &X\ar[d]^{(\pi)_!=\tspp}\\
X\ar[r]_{(\pi)_!=\tspp} & 
}
}
\]
commutes in $\SL$. Indeed, for all $q\in Q$ and $x\in X$ we have 
\[
\vcenter{\xymatrix{
q\otimes x\ar@{|->}[r]^-{\act_!=\alpha} \ar@{|->}[d]_{(\pi_2)_!}  &q\cdot x\ar@{|->}[d]^{(\pi)_!=\tspp}\\
\lres{\spp_Q(q^*)}{x}\ar@{|->}[r]_-{(\pi)_!=\tspp} & \tspp(\lres{\spp_Q(q^*)}{x})=\tspp(q\cdot x)\;,
}
}
\]
where $(\pi_2)_!$ is the direct image homomorphism of the map $\pi_2:G_1\times_{G_0} X\to X$\;, which by \cite[Lemma 3.1]{Funct2}, $(\pi_2)_!$ is given, for all $q\in Q$ and $x\in X$, by $(\pi_2)_!(q\otimes x)={\lres{\spp_Q(q^*)}{x}}$.
Finally, the mapping $\varphi: X\otimes_M X\to  Q\otimes_{A} X$ can be written as: 
\begin{align*}
\varphi(x\otimes y) &=\V_{q\cdot z\leq x} q\otimes (z\wedge y)\\
&= \V_{q\cdot z\leq x} (q\otimes z) \wedge (1_Q\otimes y)\\
&= \act^*(x) \wedge \pi^*_2(y)\\
&= [\act^*,\pi^*_2](x\otimes y)\;,
\end{align*}
which is a frame isomorphism by the condition~\textbf{P3}, the latter implies that the pairing map $\langle \act,\pi_2 \rangle$ establishes the isomorphism between $G_1\times_{G_0} X$ and $X\times_M X$ in \Loc.

\end{proof}

\begin{theorem}\label{theo:principalGimpliesprincipalQ}
Let $G$ be an open groupoid and $X$ be a fully open principal $G$-bundle over $M$. Then $X$ is a principal $Q$-locale over $M$, where $Q=\opens(G)$.
\end{theorem}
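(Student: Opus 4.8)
The plan is to endow $X$ with the complete structure of a stably supported $Q$-module and then verify \textbf{Q1}, \textbf{Q2} and \textbf{P1}--\textbf{P3}. Since $X$ is a fully open $G$-locale, the anchor $p\colon X\to G_0$ is an open surjection, so I take as $Q$-action the direct image $q\cdot x:=\act_!(q\otimes x)$ (the $\opens(G)$-module construction recalled earlier), as support the direct image $\spp_X:=p_!\colon X\to A$ with $A=\opens(G_0)$, and as $A$-action $\lres{a}{x}=p^{*}(a)\wedge x$; the module axioms \eqref{modgq1}--\eqref{modgq3} then hold by that correspondence, \eqref{modgq3} being frame distributivity. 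For the inner product I use the division map: writing the inverse of the pairing isomorphism as $\langle\theta,\pi_2\rangle$ with $\theta\colon X\times_M X\to G_1$, I set $\langle x,y\rangle:=\theta_!(x\otimes_M y)$, where $x\otimes_M y$ denotes the image of $x\otimes y$ under the quotient $X\otimes X\to\opens(X\times_M X)$ and $\theta_!$ exists because $\theta=\pi_1\circ\langle\theta,\pi_2\rangle$ is open ($\pi_1$ is a pullback of the open map $p$, and $\langle\theta,\pi_2\rangle$ is an isomorphism). Throughout I will use the two compatibilities $r\circ\theta=p\circ\pi_1$ and $d\circ\theta=p\circ\pi_2$, encoding that $\theta(x,y)$ is an arrow from $p(y)$ to $p(x)$.

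Next I check the pre-Hilbert and supported-module axioms. Axiom \eqref{hilbertmod3} holds because $\theta_!$ and the tensor preserve joins; \eqref{hilbertmod1} and \eqref{hilbertmod4} are the direct-image translations of parts (2) and (1) of Lemma~\ref{localicreasoningwithglobalpoints} (multiplicativity of $\theta$ under the action, and $\theta\circ\mathrm{swap}=i\circ\theta$ with $i_!=(-)^{*}$); and \eqref{hilbertmod2} follows from $r\circ\theta=p\circ\pi_1$ together with the Frobenius condition for $\theta$. For the support, \eqref{sppmod1} is surjectivity of $p$, and \eqref{sppmod4} is the unit $\ident\leq p^{*}p_!$ of $p_!\dashv p^{*}$, since $\lres{\spp_X(x)}{x}=p^{*}p_!(x)\wedge x=x$.

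The remaining axiom \eqref{sppmod2} is the main computation, obtained by applying Beck--Chevalley twice. First, the \textbf{Pullback} square of the $G$-locale, being a genuine pullback, gives via Lemma~\ref{lemma:BCcondition} that $\act_!\circ\pi_1^{*}=p^{*}\circ d_!$, whence $q\cdot 1_X=\lres{\spp_Q(q)}{1_X}$ for all $q\in Q$. Second, using $d\circ\theta=p\circ\pi_2$ and then Beck--Chevalley and Frobenius for the pullback square defining $X\times_M X$, I compute
\begin{align*}
\spp_Q(\langle x,x\rangle)&=(d\circ\theta)_!(x\otimes_M x)=p_!\bigl((\pi_2)_!(\pi_1^{*}(x)\wedge\pi_2^{*}(x))\bigr)\\
&=p_!\bigl((\pi_2)_!(\pi_1^{*}(x))\wedge x\bigr)=p_!\bigl(\pi^{*}(\pi_!(x))\wedge x\bigr)=p_!(x)=\spp_X(x)\,,
\end{align*}
the penultimate step using $\pi^{*}\pi_!(x)\wedge x=x$. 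Combining the two displays yields $\langle x,x\rangle\cdot 1_X=\lres{\spp_Q(\langle x,x\rangle)}{1_X}=\lres{\spp_X(x)}{1_X}$, which is \eqref{sppmod2} (indeed an equality). Finally, taking direct images of the commuting \textbf{Pullback} square and using $(\pi_1)_!(q\otimes x)=\rres{q}{\spp_X(x)}$ from \cite[Lemma 3.1]{Funct2} gives $\spp_X(q\cdot x)=\spp_Q(\rres{q}{\spp_X(x)})$, which is condition \eqref{stable1}, so the support is stable by Lemma~\ref{lemma:stablesupport}. Hence $X$ is a stably supported $Q$-module.

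It remains to verify \textbf{Q1}, \textbf{Q2} and \textbf{P1}--\textbf{P3}. For \textbf{Q1}, the right adjoint $\alpha_*$ of $\alpha=\act_!$ equals $\act^{*}$ (as $\act$ is open), a frame homomorphism, so it preserves joins. For \textbf{Q2}, taking inverse images of the \textbf{Unitary} identity $\act\circ\langle u\circ p,\ident_X\rangle=\ident_X$ and running the computation of Theorem~\ref{theo:fullopenlocale} backwards gives $\V_{q\cdot y\leq x}(\lres{\upsilon(q)}{y})=x$; thus $X$ is a $Q$-locale. Now \textbf{P1} holds since $\pi\colon X\to M$ is an open surjection by definition of a principal $G$-bundle, so $\tspp=\pi_!$ with $\tspp(1_X)=1_M$; \textbf{P2} follows by taking direct images of the invariance square \eqref{diag:invarianceofaction}, $\pi\circ\act=\pi\circ\pi_2$, and using $(\pi_2)_!(q\otimes x)=\lres{\spp_Q(q^{*})}{x}$; and \textbf{P3} holds because, exactly as in Lemma~\ref{lemma:principalQimpliesprincipalG}, $\varphi=[\act^{*},\pi_2^{*}]$ is the inverse image of the pairing map $\langle\act,\pi_2\rangle$, an isomorphism of locales by principality, so $\varphi$ is a frame isomorphism. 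I expect the main obstacle to be the inner product: realizing the algebraic axioms \eqref{hilbertmod1}--\eqref{hilbertmod4} as direct-image shadows of the geometric division map $\theta$, and the two-fold Beck--Chevalley identity $\spp_Q(\langle x,x\rangle)=\spp_X(x)$ underlying \eqref{sppmod2}; the module, stability, \textbf{Q} and \textbf{P} conditions are then comparatively routine translations of the $G$-locale axioms into direct images.
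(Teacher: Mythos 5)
Your proposal is correct, and its overall architecture is the same as the paper's: the module structure $q\cdot x=\act_!(q\otimes x)$, $\lres{a}{x}=p^*(a)\wedge x$, the inner product $\theta_!$ obtained from the inverse of the pairing map, the support $\spp_X=p_!$, and the verifications of \textbf{Q1}, \textbf{Q2} and \textbf{P1}--\textbf{P3} all match the paper's proof. Two local arguments are genuinely different, and in both cases yours is cleaner. For \eqref{hilbertmod2} the paper runs a long computation through the equivariance of $\spp_Q$, the auxiliary identity \eqref{hilbertmod5} and \eqref{eq:beckchevalley1}, whereas you get it in one step from the compatibility of $\theta$ with the anchor maps together with the Frobenius condition of the open map $\theta$. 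Likewise, your derivation of $\spp_Q(\langle x,x\rangle)=\spp_X(x)$ by Beck--Chevalley and Frobenius on the self-pullback of $\pi$ is a tighter version of the paper's \eqref{eq:theta_!-1}, whose intermediate step $(\pi_2)_!(x\otimes y)=\lres{p_!(x)}{y}$ conflates $\pi$ with $p$ (its conclusion is nevertheless correct in the two cases $y=x$ and $y=1_X$ that actually get used). For stability, the paper checks \eqref{stable2} directly via Frobenius for $p$, while you establish \eqref{stable1} from the direct images of the commuting \textbf{Pullback} square and then invoke Lemma~\ref{lemma:stablesupport}; both are valid. One caveat: your stated compatibilities $r\circ\theta=p\circ\pi_1$ and $d\circ\theta=p\circ\pi_2$ presuppose the convention in which the action pullback is taken over $d$ and $p\circ\act=r\circ\pi_1$; this paper instead takes the pullback over $r$, with $p\circ\act=d\circ\pi_1$ and $\lres{a}{q}=d^*(a)\wedge q$, so under its conventions the roles of $d$ and $r$ in your two identities must be interchanged. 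Your arguments survive this swap verbatim, because each step pairs a compatibility with the matching $A$-action or support ($\spp_Q=d_!$), but you should fix the convention explicitly; note that the paper's own diagram \eqref{diag:principality2} is itself off by exactly this swap.
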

\begin{proof}
Since $X$ is a $G$-locale, the Beck--Chevalley condition implies 
\[  \act_!\circ\pi^*_1=p^*\circ d_!\;,  \]
therefore, for all $q\in Q$, we have
\begin{equation}\label{eq:beckchevalley1}
q\cdot 1_X = \act_!(q\otimes 1_X) = \act_!(\pi^*_1(q))=p^*(\spp_Q(q))={\lres{\spp_Q(q)}{1_X}}\;,
\end{equation}
where $p^*={\lres{(-)}{1_X}}$\;.
Let us define the following  module structures, for all $q\in Q$, $a\in A$ and $x\in X$;
\begin{align*}
(q,x)&\mapsto q\cdot x:=\act_!(q\otimes x)\\
(a,x)&\mapsto {\lres{a}{x}}:=p^*(a)\wedge x\;.
\end{align*} 
Let us prove that $X$ is indeed a left $Q$-module, by showing that the following axioms hold, for all $q\in Q$, $a\in A$ and $x\in X$:
\begin{description}
\item{\eqref{modgq1}:} $(\lres{a}{q})\cdot x=(d^*(a)\wedge q)\cdot x=\act_!((d^*(a)\wedge q)\otimes x)=\act_!(\act^*(p^*(a))\wedge (q\otimes x))$, the latter equation holds because the diagram~\textbf{Pullback} is commutative, so we have $\pi^*_1(d^*(a))=\act^*(p^*(a))$, if we stabilize under finite meets, we get 
$(d^*(a)\wedge q)\otimes x=\act^*(p^*(a))\wedge (q\otimes x)$. Finally applying the Frobenius condition of $\act$ we obtain $\act_!(\act^*(p^*(a))\wedge (q\otimes x))= \act_!(q\otimes x)\wedge p^*(a)= q\cdot x\wedge p^*(a)={\lres{a}{(q\cdot x)}}$ as required.
\item{\eqref{modgq2}:} $(\rres{q}{a})\cdot x=(q\wedge r^*(a))\cdot x=\act_!((q\wedge r^*(a))\otimes x)=\act_!(q\otimes (x\wedge p^*(a)))$, the latter is obtained by stabilizing under finite meets the equation $\pi^*_1(r^*(a))=\pi^*_2(p^*(a))$. Now $\act_!(q\otimes (x\wedge p^*(a)))=\act_!(q\otimes (\lres{a}{x}))=q\cdot (\lres{a}{x})$ as we wanted.  
\item{\eqref{modgq3}:} It is trivial. 
\end{description}   
Now since $X$ is a principal $G$-locale over $M$, in particular the pairing map $\langle \act, \pi_2\rangle:G_1\times_{G_0} X\to X\times_M X$ is an isomorphism  in \Loc, there exists a map of locales $\theta: X\times_M X\to G_1$ such that the following diagram is a pullback in \Loc:
\begin{equation}\label{diag:principality2}
\vcenter{\xymatrix{
 X\times_{M} X  \ar[d]_{\theta} \ar[r]^-{\pi_2} & X \ar[d]^{p}  \\
 G_1  \ar[r]_-{d} & G_0 }
}
\end{equation}
this implies that $\theta$ is open because so is $p$ (obviously $\pi_2$ is open too). Let us prove that $\theta_!$ is an inner product. In fact, for all $x,y\in X$, $q\in Q$ and $a\in A$:
\begin{description}
\item{\eqref{hilbertmod1} and \eqref{hilbertmod4}:} follow from Lemma~\ref{localicreasoningwithglobalpoints}, and combining \eqref{hilbertmod1} and \eqref{hilbertmod4} we have 
\begin{equation}\label{hilbertmod5}
\theta_!(x\otimes y)q=\theta_!(x\otimes q^*x)\quad \text{for all $x,y\in X$ and $q\in Q$}\;.
\end{equation}
\item{\eqref{hilbertmod3}:} It obvious.
\item{\eqref{hilbertmod2}:} Applying the direct images of the diagram~\eqref{diag:principality2}, we have that the diagram
\begin{equation*}\label{diag:principality3}
\vcenter{\xymatrix{
 X\otimes_{M} X  \ar[d]_{\theta_!} \ar[r]^-{(\pi_2)_!} & X \ar[d]^{p_!}  \\
 Q  \ar[r]_-{d_!=\spp_Q} & A}
}
\end{equation*}
is commutative in $\SL$, which implies that for all $x,y\in X$:
\begin{align*} 
\spp_Q(\theta_!(x\otimes y))&=p_!((\pi_2)_!(x\otimes y))\\
&= p_!(\lres{p_!(x)}{y})&\text{(by \cite[Lemma 3.1]{Funct2})}\\
&= p_!(p^*(p_!(x))\wedge y)\\
&=p_!(x)\wedge p_!(y)\;.&\text{[by \eqref{preliminaries, eq: frobeniuscondition} of $p$]}
\end{align*}
In particular, taking into account that $p_!(1_X)=1_A$, for all $x\in X$, we have
\begin{equation}\label{eq:theta_!-1}
\spp_Q(\theta_!(x\otimes1_X))=p_!(x)\wedge p_!(1_X)=p_!(x)\wedge 1_A
=p_!(x)=\spp_Q(\theta_!(x\otimes x))\;.
\end{equation}
Then
\begin{align*}
\spp_Q(\theta_!(\lres{a}{x}\otimes 1_X))&=p_!(\lres{a}{x})=p_!(x\wedge p^*(a))\\
&=a\wedge p_!(x)&\text{[by \eqref{preliminaries, eq: frobeniuscondition} of $p$]}\\
&=a\wedge \spp_Q(\theta_!(x\otimes1_X))\\ 
&=\spp_Q(\lres{a}{\theta_!(x\otimes 1_X)})\;,&\text{($\spp_Q$ is equivariant)}
\end{align*}
which implies $\lres{\spp_Q(\theta_!(\lres{a}{x}\otimes 1_X))}{1_Q}={\lres{\spp_Q(\lres{a}{\theta_!(x\otimes 1_X)})}{1_Q}}$\;. Finally
\begin{align*}
\theta_!(\lres{a}{x}\otimes 1_X)&=\theta_!(\lres{a}{x}\otimes {\lres{1_A}{1_X}})\\
&=\theta_!(\lres{a}{x}\otimes {\lres{\spp_Q(1_Q)}{1_X}})&\text{[by \eqref{sppmod1}]}\\
&=\theta_!(\lres{a}{x}\otimes 1_Q1_X)&\text{[by \eqref{eq:beckchevalley1}]}\\
&=\theta_!(\lres{a}{x}\otimes 1^*_Q1_X)\\
&=\theta_!(\lres{a}{x}\otimes 1_X)1_Q&\text{[by \eqref{hilbertmod5}]}\\
&={\lres{\spp_Q(\theta_!(\lres{a}{x}\otimes 1_X))}{1_Q}}\\
&={\lres{\spp_Q(\lres{a}{\theta_!(x\otimes 1_X)})}{1_Q}}\\
&={\lres{a}{\theta_!(x\otimes 1_X)1_Q}}\\
&={\lres{a}{\theta_!(x\otimes 1_X)}}\;.& \text{[by \eqref{eq:beckchevalley1} and \eqref{hilbertmod5}]}
\end{align*}
This proves that $\theta_!$ is an inner product and from now on we will denoted it by $\langle -,-\rangle$ and thus $X$ is a pre-Hilbert $Q$-module. 
\end{description}
Since $p$ is an open surjection, its direct image $p_!:X\to A$ is a monotone map satisfying for all $x\in X$:
\begin{description}
\item{\eqref{sppmod1}:} $p_!(1_X)=1_A$ because $p$ is surjective.
\item{\eqref{sppmod2}:} using \eqref{eq:theta_!-1} and \eqref{eq:beckchevalley1} we have
\begin{align*}
 \lres{p_!(x)}{1_X} &={\lres{\spp_Q(\langle x,x\rangle)}{1_X}}\\
 &={\lres{\spp_Q(\langle x,x\rangle)}{(1_Q\cdot 1_X)}}\\
 &=({\lres{\spp_Q(\langle x,x\rangle)}{1_Q}})\cdot 1_X\\
 &=(\langle x,x\rangle 1_Q)\cdot 1_X\\
 &=\langle x,x\rangle(1_Q\cdot 1_X)\\
 &=\langle x,x\rangle\cdot 1_X\;.
 \end{align*} 
\item{\eqref{sppmod4}:} $\lres{p_!(x)}{x}=p^*(p_!(x))\wedge x =x$ is due to the unit of the adjunction $p^*\circ p_!\geq \ident$. 
\end{description}   
The Frobenius condition of $p$ give us the stability of $p_!$, as follows:
\begin{align*} 
p_!(q\cdot x)=p_!((\lres{\spp_Q(q)}{q})\cdot x)=p_!(\lres{\spp_Q(q)}{(q\cdot x)})&=p_!(p^*(\spp_Q(q))\wedge (q\cdot x))\\
&=\spp_Q(q)\wedge(q\cdot x)\le \spp_Q(q)\;.  
\end{align*}
Therefore $X$ is a stably supported $Q$-module with support $p_!=\spp_X$. Now let us prove that $X$ is a $Q$-locale. In order to see this, it suffices to show the following two axioms:
\begin{description}
\item{\textbf{Q1:}} since $\act$ is an open map of locales, $\act_!$ exists and $\act^*:X\to Q\otimes_A X$ giving by 
\[ \act^*(x)=\V_{q\cdot z\leq x} q\otimes z\;    \]
is a frame homomorphism. 
\item{\textbf{Q2:}} a direct translation of the diagram~\textbf{Unitary} in terms of inverse images yields $\langle u\circ p,\ident_X \rangle^*\circ \act^*=\ident_X$. Then, for all $x\in X$, we have
\begin{align*}
x&=\langle u\circ p,\ident_X \rangle^*(\act^*(x))=[p^*\circ \upsilon, \ident_X](\V_{q\cdot y\leq x}q\otimes y)\\
&=\V_{q\cdot y\leq x}p^*(\upsilon(q))\wedge y=\V_{q\cdot y\leq x}(\lres{\upsilon(q)}{y})\;.
\end{align*}
Hence, $X$ is a $Q$-locale.
 \end{description}
Finally, let us prove that $X$ is a principal $Q$-locale. \textbf{P1} follows from the fact that $\pi$ is an open surjection, then $X$ is an open $M$-locale with support $\pi_!=\tspp$ satisfying $\tspp(1_X)=1_M$. \textbf{P2} holds because the diagram~\eqref{diag:invarianceofaction} is commutative, so we have $\pi_!\circ\act_!=\pi_!\circ(\pi_2)_!$\;, then
\[  \tspp(q\cdot x)=\tspp(\act_!(q\otimes x))=\tspp((\pi_2)_!(q\otimes x))=\tspp(\lres{\spp_Q(q^*)}{x})\;.   \]
\textbf{P3} is satisfied because the pairing map $\langle \act, \pi_2\rangle$ is an isomorphism between  $G_1\times_{G_0} X$ and $X\times_M X$ in \Loc. Therefore the co-pairing map $[\act^*,\pi^*_2]$ is a frame isomorphism, moreover:
\begin{align*}
[\act^*,\pi^*_2](x\otimes y)&= \act^*(x) \wedge \pi^*_2(y)\\
&= \V_{q\cdot z\leq x} (q\otimes z) \wedge (1_Q\otimes y)\\
&=\V_{q\cdot z\leq x} q\otimes (z\wedge y)\\
&= \varphi(x\otimes y)\;,
\end{align*}
Hence, the mapping $\varphi: X\otimes_M X\to  Q\otimes_{A} X$ is a frame isomorphism. 
\end{proof}

Thus we conclude that the fully open principal $G$-bundles and the principal $\opens(G)$-locales, for any open groupoid $G$, amount to the same thing.

\begin{corollary}\label{cor:bijectionprincipalbundles}
Let G be an open groupoid. The assignment $X\mapsto \opens(X)$ from fully open principal $G$-bundles to principal $\opens(G)$-locales is a bijection.
\end{corollary}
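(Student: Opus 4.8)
The plan is to assemble the bijection directly from the two theorems just proved, which already establish both directions of the correspondence; the remaining work is to verify that these constructions are mutually inverse. First I would make the assignment of the statement precise: given a fully open principal $G$-bundle $X$ over $M$, Theorem~\ref{theo:principalGimpliesprincipalQ} produces a principal $Q$-locale structure on $X$ (with $Q=\opens(G)$), where the support is the direct image $\spp_X=p_!$, the inner product is $\theta_!$, and the $Q$-module action is $\act_!$. Conversely, given a principal $Q$-locale $X$ over $M$, Lemma~\ref{lemma:principalQimpliesprincipalG} (building on Theorem~\ref{theo:fullopenlocale}) produces a fully open principal $G$-bundle. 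The corollary asserts that these two passages are inverse to each other up to the appropriate notion of isomorphism, hence a bijection.

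The key step is therefore to check that starting from a fully open principal $G$-bundle, passing to its principal $Q$-locale, and then back again recovers the same $G$-locale, and symmetrically for the other composite. In the forward-then-back direction the underlying locale $X$ never changes; what must be checked is that the $G$-action reconstructed from the $Q$-module structure agrees with the original $\act$. This follows because the action is determined by its direct image $\act_!=\alpha$ (Theorem~\ref{theo:fullopenlocale} sets $\act^*=\alpha_*$), and in Theorem~\ref{theo:principalGimpliesprincipalQ} the $Q$-action is defined precisely by $q\cdot x=\act_!(q\otimes x)$; since an open map of locales is determined by its direct image, the two actions coincide, and the projection $p$ is likewise recovered from $\spp_X=p_!$ via the adjunction of Lemma~\ref{lemma:framehomo}. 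For the back-then-forward direction one checks that the support, inner product, and principality data reconstructed from the $G$-bundle reproduce the original $Q$-locale structure: the support is unique by Lemma~\ref{lemma:framehomo}, the inner product $\theta_!$ is forced by the pairing isomorphism $\langle\act,\pi_2\rangle$, and conditions \textbf{Q1}, \textbf{Q2}, \textbf{P1}--\textbf{P3} are exactly the translations of the defining $G$-bundle data.

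The main obstacle I anticipate is bookkeeping rather than conceptual: one must confirm that ``the same thing'' is interpreted consistently on both sides, i.e.\ that isomorphic fully open principal $G$-bundles correspond to isomorphic principal $Q$-locales and vice versa, so that the bijection is genuinely well-defined on the relevant classes. This amounts to observing that an equivariant isomorphism of $G$-bundles over $M$ is the same datum as a $Q$-locale isomorphism (a frame homomorphism commuting with the $Q$-action, the $A$-action, and the supports), which follows from the fact that $f^*$ commutes with the quantale actions precisely when $f$ commutes with the groupoid actions, as recorded in the preliminary discussion of $\opens(G)$-modules. I would close the argument by remarking that, since each construction is the literal inverse of the other at the level of underlying locales together with their structure maps, the assignment $X\mapsto\opens(X)$ is a bijection, as claimed.
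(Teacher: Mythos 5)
Your proposal is correct and follows essentially the same route as the paper, whose entire proof is the citation of Lemma~\ref{lemma:principalQimpliesprincipalG} and Theorem~\ref{theo:principalGimpliesprincipalQ}. The mutual-inverse verification you spell out (the $G$-action and $Q$-action determine each other via $\act_!$, the projection and support via the adjunction of Lemma~\ref{lemma:framehomo}) is exactly what the paper leaves implicit, so your write-up is a more careful rendering of the same argument rather than a different one.
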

\begin{proof}
This follows from Lemma~\ref{lemma:principalQimpliesprincipalG} and Theorem~\ref{theo:principalGimpliesprincipalQ}.
\end{proof}

\begin{bibdiv}

\begin{biblist}

\bib{HS87}{article}{
   author={Hilsum, Michel},
   author={Skandalis, Georges},
   title={Morphismes $K$-orient\'es d'espaces de feuilles et fonctorialit\'e
   en th\'eorie de Kasparov (d'apr\`es une conjecture d'A. Connes)},
   language={French, with English summary},
   journal={Ann. Sci. \'Ecole Norm. Sup. (4)},
   volume={20},
   date={1987},
   number={3},
   pages={325--390},
   issn={0012-9593},
   review={\MR{925720 (90a:58169)}},
}
\bib{elephant}{book}{
   author={Johnstone, Peter T.},
   title={Sketches of an Elephant: A Topos Theory Compendium. Vol. 2},
   series={Oxford Logic Guides},
   volume={44},
   publisher={The Clarendon Press Oxford University Press},
   place={Oxford},
   date={2002},
   pages={i--xxii, 469--1089 and I1--I71},
   isbn={0-19-851598-7},
   review={\MR{2063092 (2005g:18007)}},
}
\bib{JT}{article}{
   author={Joyal, Andr{\'e}},
   author={Tierney, Myles},
   title={An extension of the Galois theory of Grothendieck},
   journal={Mem. Amer. Math. Soc.},
   volume={51},
   date={1984},
   number={309},
   pages={vii+71},
   issn={0065-9266},
   review={\MR{756176 (86d:18002)}},
}
\bib{MeyerZhu}{article}{
   author={Meyer, Ralf},
   author={Zhu, Chenchang},
   title={Groupoids in categories with pretopology},
   journal={Theory Appl. Categ.},
   volume={30},
   date={2015},
   pages={Paper No. 55, 1906--1998},
   issn={1201-561X},
   review={\MR{3438234}},
}
\bib{Mr99}{article}{
   author={Mr{\v{c}}un, Janez},
   title={Functoriality of the bimodule associated to a Hilsum--Skandalis
   map},
   journal={$K$-Theory},
   volume={18},
   date={1999},
   number={3},
   pages={235--253},
   issn={0920-3036},
   review={\MR{1722796 (2001k:22004)}},
   doi={10.1023/A:1007773511327},
}
\bib{MrcunPhD}{thesis}{
   author={Mr{\v{c}}un, Janez},
   title={Stability and Invariants of Hilsum--Skandalis Maps},
   institution={Universiteit Utrecht, Faculteit Wiskunde en Informatica},
   year={1996},
   type={PhD Thesis},
   eprint={arXiv:math.DG/0506484v1}
}
\bib{Paterson}{book}{
   author={Paterson, Alan L. T.},
   title={Groupoids, inverse semigroups, and their operator algebras},
   series={Progress in Mathematics},
   volume={170},
   publisher={Birkh\"auser Boston Inc.},
   place={Boston, MA},
   date={1999},
   pages={xvi+274},
   isbn={0-8176-4051-7},
   review={\MR{1724106 (2001a:22003)}},
}
\bib{picadopultr}{book}{
   author={Picado, Jorge},
   author={Pultr, Ale{\v{s}}},
   title={Frames and locales --- topology without points},
   series={Frontiers in Mathematics},
   publisher={Birkh\"auser/Springer Basel AG, Basel},
   date={2012},
   pages={xx+398},
   isbn={978-3-0348-0153-9},
   review={\MR{2868166}},
   doi={10.1007/978-3-0348-0154-6},
}
\bib{PR12}{article}{
   author={Protin, M. Clarence},
   author={Resende, Pedro},
   title={Quantales of open groupoids},
   journal={J. Noncommut. Geom.},
   volume={6},
   date={2012},
   number={2},
   pages={199--247},
   issn={1661-6952},
   review={\MR{2914865}},
   doi={10.4171/JNCG/90},
}
\bib{QRnonuspp}{report}{
  author={Quijano, Juan Pablo},
  author={Resende, Pedro},
  title={Effective equivalence relations and principal quantales},
  date={2018},
  eprint={https://arxiv.org/abs/1807.08979},
}
\bib{Funct2}{report}{
  author={Quijano, Juan Pablo},
  author={Resende, Pedro},
  title={Functoriality of groupoid quantales. II},
  date={2018},
  eprint={https://arxiv.org/abs/1803.01075},
}
\bib{QuijanoPhD}{thesis}{
   author={Quijano, Juan Pablo},
   title={Sheaves and functoriality of groupoid quantales},
   institution={Univ.\ Lisboa},
   year={2018},
   type={Doctoral Thesis},
}
\bib{Re07}{article}{
   author={Resende, Pedro},
   title={\'Etale groupoids and their quantales},
   journal={Adv. Math.},
   volume={208},
   date={2007},
   number={1},
   pages={147--209},
   issn={0001-8708},
   review={\MR{2304314 (2008c:22002)}},
}
\bib{GSQS}{article}{
   author={Resende, Pedro},
   title={Groupoid sheaves as quantale sheaves},
   journal={J. Pure Appl. Algebra},
   volume={216},
   date={2012},
   number={1},
   pages={41--70},
   issn={0022-4049},
   review={\MR{2826418}},
   doi={10.1016/j.jpaa.2011.05.002},
}
\bib{Re15}{article}{
   author={Resende, Pedro},
   title={Functoriality of groupoid quantales. I},
   journal={J. Pure Appl. Algebra},
   volume={219},
   date={2015},
   number={8},
   pages={3089--3109},
   issn={0022-4049},
   review={\MR{3320209}},
   doi={10.1016/j.jpaa.2014.10.004},
}
\bib{Vi07}{article}{
   author={Vickers, Steven},
   title={Locales and toposes as spaces},
   conference={
      title={Handbook of spatial logics},
   },
   book={
      publisher={Springer, Dordrecht},
   },
   date={2007},
   pages={429--496},
   review={\MR{2393892}},
}

\end{biblist}

\end{bibdiv}

\vspace*{5mm}
\noindent {\sc
{\it E-mail:} {\sf quijano.juanpablo@gmail.com}

\end{document}